\numberwithin{equation}{section}
\numberwithin{figure}{section}
\theoremstyle{plain}
\newtheorem{thm}{\protect\theoremname}
\theoremstyle{plain}
\theoremstyle{plain}
\newtheorem{lem}[thm]{\protect\lemmaname}
\newtheorem{prop}[thm]{\protect\propositionname}
\newtheorem{prob}[thm]{\protect\problemname}
\numberwithin{thm}{section}
\theoremstyle{definition}
\newcommand{\E}{\mathbb{E}}
\newcommand{\Z}{\mathbb{Z}}
\newcommand{\gr}{\textnormal{gr}}
\newcommand{\eps}{\varepsilon}
\newcommand{\jf}[1]{\textcolor{purple}{\textbf{[JF:} #1\textbf{]}}}
\newcommand{\kfm}{K_{4}^{(3)} - e}
\newcommand{\lk}[1]{S_{#1}^{(3)}}
\newcommand{\kn}[2]{K_{#1}^{(#2)}}
\newcommand{\grid}[2]{G_{#1 \times #2}}
\newcommand{\bin}{\textnormal{Bin}}
\providecommand{\corollaryname}{Corollary}
\providecommand{\lemmaname}{Lemma}
\providecommand{\theoremname}{Theorem}
\providecommand{\conjecturename}{Conjecture}
\providecommand{\propositionname}{Proposition}
\providecommand{\problemname}{Problem}
\title{Hypergraph Ramsey numbers of cliques versus stars}
\author{David Conlon\thanks{Department of Mathematics, California Institute of Technology, Pasadena, CA 91125. Email: dconlon@caltech.edu. Research supported by NSF Award DMS-2054452.} \and
Jacob Fox\thanks{Department of Mathematics, Stanford University, Stanford, CA 94305. Email: jacobfox@stanford.edu. Research supported by a Packard Fellowship and by NSF Awards DMS-1800053 and DMS-2154169.} \and
Xiaoyu He\thanks{Department of Mathematics, Princeton University, Princeton, NJ 08544. Email: xiaoyuh@princeton.edu. Research supported by NSF award DMS-2103154.} \and
Dhruv Mubayi\thanks{Department of Mathematics, Statistics and Computer Science, University of Illinois, Chicago, IL 60607. Email: mubayi@uic.edu. Research partially supported by NSF awards DMS-1763317,
DMS-1952767, DMS-2153576 and a Humboldt Research Award.} \and
Andrew Suk\thanks{Department of Mathematics, University of California at San Diego, La Jolla, CA 92093. Email: asuk@ucsd.edu. Research supported by an NSF
CAREER award and NSF award DMS-1952786.} \and 
Jacques Verstra\"ete\thanks{Department of Mathematics, University of California at San Diego, La Jolla, CA 92093. Email: jacques@ucsd.edu. Research supported by NSF award DMS-1800332.}}
\date{}
\begin{document}

\maketitle

\begin{abstract}
    Let $K_m^{(3)}$ denote the complete $3$-uniform hypergraph on $m$ vertices and $\lk{n}$ the $3$-uniform hypergraph on $n+1$ vertices consisting of all $\binom{n}{2}$ edges incident to a given vertex.  Whereas many hypergraph Ramsey numbers grow either at most polynomially or at least exponentially, we show that the off-diagonal Ramsey number $r(\kn{4}{3},\lk{n})$ exhibits an unusual intermediate growth rate, namely,
    \[
    2^{c \log^2 n} \le r(\kn{4}{3},\lk{n}) \le 2^{c' n^{2/3}\log n}
    \]
    for some positive constants $c$ and $c'$. 
    The proof of these bounds 
    brings in a novel Ramsey problem on grid graphs which may be of independent interest: what is the minimum $N$ such that any $2$-edge-coloring of the Cartesian product $K_N \square K_N$ contains either a red rectangle
    or a blue $K_n$? 
    
\end{abstract}

\section{Introduction}
A {\it $k$-uniform hypergraph} (henceforth, $k$-graph) $G=(V,E)$ consists of a vertex set $V$ and an edge set $E\subseteq \binom{V}{k}$. In particular, we write $\kn{n}{k}$ for the complete $k$-graph with $V = [n]$ and $E = \binom{[n]}{k}$. Ramsey’s theorem states that for any $k$-graphs $H_1$ and $H_2$ there
is a positive integer $N$ such that any $k$-graph $G$ of order $N$ either contains $H_1$ (as a subgraph) or its complement $\overline{G}$ contains $H_2$. The Ramsey number $r(H_1, H_2)$ is the smallest such $N$ and the main objective of hypergraph Ramsey theory is to determine the size of these Ramsey numbers. In particular, for fixed $k\ge 3$, the two central problems in the area are to determine the growth rate of the ``diagonal'' Ramsey number $r(\kn{n}{k},\kn{n}{k})$ as $n\rightarrow \infty$ and the ``off-diagonal'' Ramsey number $r(\kn{m}{k},\kn{n}{k})$ where $m>k$ is fixed and $n\rightarrow \infty$. 

Seminal work of Erd\H{o}s--Rado \cite{ErRa} and Erd\H{o}s--Hajnal (see, e.g., \cite{CoFoSu2}) reduces the estimation of diagonal Ramsey numbers for $k \geq 3$ to the $k=3$ case. For off-diagonal Ramsey numbers, the only case for which the tower height of the growth rate is not known is  $r(\kn{k+1}{k},\kn{n}{k})$, though it was noted in~\cite{MSBLMS} that this tower height could be determined by proving that the 4-uniform Ramsey number $r(\kn{5}{4},\kn{n}{4})$ is double exponential in a power of $n$. Moreover, it was shown in~\cite{MSJCTB} that if $r(\kn{n}{3},\kn{n}{3})$
 grows double exponentially in a power of $n$, then the same is also true for $r(\kn{5}{4},\kn{n}{4})$. Hence, the growth rate for all diagonal and off-diagonal hypergraph Ramsey numbers would follow from knowing the growth rate of the diagonal Ramsey number when $k=3$. Because of this pivotal role, we will restrict our attention below to the case $k=3$.
 
Despite considerable progress in this area in recent years, our state of knowledge about the two central problems mentioned above remains rather dismal.
The best known bounds in the diagonal case (see, e.g., the survey~\cite{CoFoSu3}) are of the form
\begin{equation}\label{eq:diagonal-hypergraph-ramsey}
2^{c n^2} \le r(K_n^{(3)}, K_n^{(3)}) \le 2^{2^{c' n}}
\end{equation}
for some positive constants $c$ and $c'$, differing by an entire exponential order. For the off-diagonal case, when $s \geq 4$ is fixed, the best known bounds \cite{CoFoSu} are of the form
\begin{equation}\label{eq:offdiagonal-hypergraph-ramsey}
2^{c n \log n} \le r(K_s^{(3)}, K_n^{(3)}) \le 2^{c' n^{s-2}\log n}
\end{equation}
for some positive constants $c$ and $c'$, differing by a power of $n$ in the exponent. 

As a possible approach to improving the lower bounds in (\ref{eq:diagonal-hypergraph-ramsey}) and (\ref{eq:offdiagonal-hypergraph-ramsey}), Fox and He~\cite{FoHe} gave new lower-bound constructions for the Ramsey numbers $r(\kn{n}{3}, \lk{t})$, where $\lk{t}$ is the ``star $3$-graph'' on $t+1$ vertices whose edges are all ${t \choose 2}$ triples containing a given vertex. For $n\rightarrow\infty$, they showed that $r(\kn{n}{3}, \lk{n}) \ge 2^{c n^2}$ for some positive constant $c$, giving a new proof of the lower bound in (\ref{eq:diagonal-hypergraph-ramsey}), while, for $t\ge 3$ fixed, they showed that $r(\kn{n}{3},\lk{t}) = 2^{\Theta(n \log n)}$, giving another proof of the lower bound in  (\ref{eq:offdiagonal-hypergraph-ramsey}). In particular, when $t = 3$, this implies that $r(\kn{n}{3},\kfm)= 2^{\Theta(n \log n)}$. One surprising feature of these results is that they give the same bounds that were previously known for clique Ramsey numbers, but with one of the cliques  replaced by the sparser corresponding star. 

Fox and He essentially settled the growth rate of $r(\kn{n}{3}, \lk{t})$ in two cases: when the star is fixed in size and the clique grows; and when the star and clique grow together. The present paper studies the remaining regime: when the clique is fixed in size and the star grows.   First, a standard application of the Lov\'asz Local Lemma yields the following proposition.

\begin{prop} \label{prop:ll}
There are positive constants $c$ and $c'$ such that 
\[
c \frac{n^2}{\log^2 n} \le r(\kfm, \lk{n}) \le c' \frac{n^2}{\log n}.
\]
\end{prop}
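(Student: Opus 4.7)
For the upper bound, the Ajtai--Koml\'os--Szemer\'edi theorem $r(K_3, K_n) \le c_0 n^2/\log n$ suffices, via a one-line link-graph argument. In any red/blue coloring of $\binom{[N]}{3}$, fix a vertex $v \in [N]$ and form its red link graph $L_v$ on $[N]\setminus\{v\}$, whose edges are the pairs $\{x,y\}$ with $\{v,x,y\}$ red. A triangle $\{x,y,z\}$ in $L_v$ yields a red $\kfm$ on $\{v,x,y,z\}$, and an independent set of size $n$ in $L_v$ yields a blue $\lk n$ centered at $v$. Hence $N \ge c_0 n^2/\log n + 1$ forces one of these configurations to appear.

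For the lower bound, I would apply the asymmetric Lov\'asz Local Lemma to a random coloring, following the template of Spencer's proof of $r(K_3, K_n) \ge \Omega(n^2/\log^2 n)$. Set $N = c\, n^2/\log^2 n$ and color each triple in $\binom{[N]}{3}$ independently red with probability $p = c_1 \log n/n$, where $c>0$ is a sufficiently small constant and $c_1$ a sufficiently large one. There are two families of bad events: for each $S \in \binom{[N]}{4}$, the event $A_S$ that at least three of the four triples in $\binom{S}{3}$ are red (creating a red $\kfm$), with $\Pr[A_S] \le 4p^3$; and for each pair $(v,L)$ with $v \in [N]$ and $L \in \binom{[N]\setminus\{v\}}{n}$, the event $B_{v,L}$ that all $\binom{n}{2}$ triples $\{v\} \cup \{x,y\}$ with $\{x,y\}\subseteq L$ are blue (creating a blue $\lk n$ centered at $v$), with $\Pr[B_{v,L}] = (1-p)^{\binom{n}{2}} \le n^{-c_1 n/2}$. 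Assigning LLL weights $x_{A_S} = \Theta(p^3)$ to each $A$-event and $x_{B_{v,L}} = n^{-\alpha n}$ for a constant $\alpha \in (1, c_1/2)$ to each $B$-event, one verifies the asymmetric LLL inequalities.

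The main obstacle is the coupling between the two event families in the LLL inequality for a $B$-event. Each $B_{v,L}$ depends on $\binom{n}{2}$ triples, and each such triple lies in $\Theta(N)$ distinct $A$-events, so $B_{v,L}$ has $\Theta(n^2 N)$ $A$-event dependencies. The associated LLL term contributes $\Theta(n^2 N\, p^3) = \Theta(c\, c_1^3\, n\log n)$, which must be absorbed by the slack $-\log\Pr[B_{v,L}] = \Theta(c_1\, n\log n)$. Taking $c$ sufficiently small relative to $1/c_1^2$ (for instance, $c_1 = 4$ and $c = 1/128$) makes this contribution negligible compared with the slack, closing both LLL inequalities and yielding $r(\kfm, \lk n) \ge c\, n^2/\log^2 n$ for the corresponding absolute constant $c > 0$.
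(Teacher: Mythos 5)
Your proposal is correct and follows essentially the same route as the paper: the upper bound via the link-graph reduction $r(\kfm,\lk{n}) \le r(K_3,K_n)+1$ combined with Ajtai--Koml\'os--Szemer\'edi, and the lower bound via the asymmetric Lov\'asz Local Lemma applied to a random $3$-graph with $N = \Theta(n^2/\log^2 n)$ and $p = \Theta(\log n/n)$, with the dominant dependency term $\Theta(n^2 N p^3)$ absorbed by the slack $\Theta(p n^2)$ exactly as in the paper. The only cosmetic differences are that you index the red-clique bad events by $4$-sets rather than by individual copies of $\kfm$ and assign the star events weight $n^{-\alpha n}$ rather than the reciprocal of their number; neither affects the argument.
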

Our main result is that if we replace $\kfm$ by $\kn{4}{3}$, then the Ramsey number has an exotic growth rate intermediate between polynomial and exponential. On the other hand, once we move up to $\kn{5}{3}$ the growth rate stabilizes to exponential.

\begin{thm} \label{thm:main}
There are positive constants $c$ and $c'$ such that 
$$2^{c \log^2 n } \le r(\kn{4}{3},\lk{n})\le 2^{c' n^{2/3}\log n}.$$
Moreover, for each $s\ge 5$, there are positive constants $c_s$ and $c'_s$ such that $2^{c_s n} \le r(\kn{s}{3},\lk{n}) \le 2^{c'_s n}$. 
\end{thm}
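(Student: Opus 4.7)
The theorem splits into four statements: upper and lower bounds on $r(\kn{4}{3},\lk{n})$ and on $r(\kn{s}{3},\lk{n})$ for $s\ge 5$. The plan for the $s=4$ case hinges on the grid Ramsey number $g(n)$ from the abstract. The first step is to establish the reduction $r(\kn{4}{3},\lk{n})\le 2g(n)$: given a 2-coloring of triples of $[N]$ with no blue $\lk{n}$, partition $[N]=A\sqcup B$ with $|A|=|B|=N/2$ and identify the "bipartite" triples $\{a,b_1,b_2\}$ and $\{a_1,a_2,b\}$ (with $a,a_i\in A$, $b,b_j\in B$) with the horizontal and vertical edges of $K_{N/2}\square K_{N/2}$, respectively. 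A red rectangle then encodes a red $\kn{4}{3}$ on two vertices of each side, while a blue $K_n$ in a row (resp.\ column) encodes a blue $\lk{n}$ centered at the corresponding row (resp.\ column) vertex. Thus the upper bound on $r(\kn{4}{3},\lk{n})$ reduces to an upper bound on $g(n)$.

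\textbf{Bounds on $g(n)$.} To prove $g(n)\le 2^{cn^{2/3}\log n}$, assume no red rectangle and no blue $K_n$ in any row/column, so every row and column has red subgraph with independence number less than $n$. A Tur\'an count gives that the average over column pairs $(j_1,j_2)$ of the number of rows $i$ with edge $(j_1,j_2)$ red in row $i$ is at least $N/(n-1)$. The plan is to iteratively (i) pick a column pair with large common-red row set $R(j_1,j_2)$, (ii) inside $R(j_1,j_2)$ find two rows sharing red columns via an off-diagonal Ramsey argument using $r(K_t,K_n)\le\binom{t+n-2}{n-1}$ across many columns simultaneously, and (iii) balance the parameter $t$ against the size drop at each step. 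Optimizing this tradeoff is where the exponent $n^{2/3}\log n$ emerges. For the lower bound $g(n)\ge 2^{c\log^2 n}$, I would use a probabilistic construction on $K_M\square K_M$ with $M=2^{c\log^2 n}$: color row and column edges independently from tuned distributions, then apply the Lov\'asz Local Lemma or an alteration argument to simultaneously kill red rectangles and blue $K_n$s. Lifting the grid coloring to a hypergraph coloring on $2M$ vertices proceeds via the bipartite correspondence, with within-group triples colored by a recursive instance of the construction at a smaller parameter.

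\textbf{Bounds for $s\ge 5$.} For $r(\kn{s}{3},\lk{n})\le 2^{c'_sn}$, use an Erd\H{o}s-Rado-style greedy nesting: pick vertices $v_1,\ldots,v_s$ and nested sets $V_0\supset V_1\supset\cdots\supset V_{s-1}$ so that $\{v_i,x,y\}$ is red for every $i$ and $\{x,y\}\subseteq V_i$. At each step the blue link of $v_i$ restricted to $V_{i-1}$ is $K_n$-free, so $r(K_t,K_n)\le\binom{t+n-2}{n-1}$ lets us shrink $V_{i-1}$ to a red clique $V_i$; careful accounting of the nested sizes gives the desired $2^{O_s(n)}$. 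For $r(\kn{s}{3},\lk{n})\ge 2^{c_sn}$, use an Erd\H{o}s-Hajnal stepping-up from an $m$-vertex graph with no $K_{s-1}$ and no independent set of size $n$ (which exists for $m=\Theta_s(n)$ by the probabilistic lower bound on $r(K_{s-1},K_n)$); the stepping-up produces a 2-colored 3-graph on $2^{\Theta_s(n)}$ vertices simultaneously avoiding red $\kn{s}{3}$ and blue $\lk{n}$.

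\textbf{Main obstacle.} The crux of the proof is the upper bound on $g(n)$. The naive Tur\'an argument, using that two $K_n$-free graphs cannot cover $K_{r(K_n,K_n)}$, gives only $g(n)\le 2^{O(n)}$; pushing down to the $n^{2/3}\log n$ exponent requires an iteration that simultaneously exploits both the row- and column-Ramsey constraints, combined with a density increment inside the grid and sharper off-diagonal Ramsey inputs. The matching $2^{c\log^2 n}$ lower bound construction is likewise delicate because it must suppress red rectangles and blue $K_n$s together while keeping the grid large, and the subsequent lift to a 3-graph construction requires some care in handling the within-group triples.
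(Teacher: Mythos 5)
Your high-level architecture (reduce the $\kn{4}{3}$ upper bound to the grid Ramsey number via the bipartite correspondence, and treat $s\ge 5$ separately) matches the paper, but several of the steps you describe would fail. The most serious gap is the lower bound $\gr(\grid{2}{2},K_n)\ge 2^{c\log^2 n}$: coloring the row edges and column edges \emph{independently} and then applying the Lov\'asz Local Lemma or an alteration cannot reach this bound. To keep every row free of a blue $K_n$ you need red density $p\gtrsim (\log N)/n$ in each row, while the Local Lemma condition for the $\Theta(N^4)$ rectangle events, each of probability $p^4$ and each depending on $\Theta(N)$ others, forces $p\le O(N^{-1/4})$; together these give only $N=O((n/\log n)^4)$, i.e.\ a polynomial bound. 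The entire point of the paper's Lemma 3.1 is that the rows and columns must be \emph{coupled}: pairs of columns are made edge-disjoint on prescribed vertex sets $U_i$ by drawing their restrictions from complementary random graphs $A_i$ and $\overline{A_i}$ according to bipartitions $P_i\sqcup Q_i$, and a horizontal edge between two columns is only placed inside a set $U_i$ on which those columns are disjoint, so that rectangles are excluded with probability $1$ while each row and column still has marginal $G(N,n^{-3/4})$. The subsequent lift to $r(\kn{4}{3},\lk{n})$ is also more delicate than ``a recursive instance'': the paper layers $\log N$ independent copies along binary bits and must then delete (``mark'') triples to destroy the red $\kn{4}{3}$'s straddling two levels, using the extra sparsity property (3) of the construction to show these deletions do not create a blue $\lk{nt}$.

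There are two further gaps. For the upper bound on $\gr(\grid{2}{2},K_n)$, your Tur\'an/density-increment iteration as sketched does not produce the exponent $n^{2/3}$; the paper's proof hinges on the Erd\H{o}s--Szemer\'edi theorem, which in an $r$-edge-coloring of $K_{N'}$ finds a clique of order $c_0\frac{r}{\log r}\log N'$ omitting a color. One takes $r=n^{1/3}$, column height $M=r(K_r,K_n)\le n^r$, pigeonholes columns by the location of a red $K_r$, uses rectangle-freeness to see that each pair of surviving columns has at most one red horizontal edge among those $r$ rows (yielding an $r$-coloring of column pairs), and then applies Erd\H{o}s--Szemer\'edi; without an input of this strength the argument does not beat $2^{O(n)}$. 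For $s\ge 5$, the na\"ive Erd\H{o}s--Rado nesting does not give $2^{O_s(n)}$: each application of $r(K_t,K_n)\le\binom{t+n-2}{n-1}$ costs roughly an $n$-th power, so already for $s=5$ one is forced to take $|V_1|\ge r(K_{n+1},K_n)=2^{\Omega(n)}$ and hence $|V_0|\ge r(K_{2^{\Omega(n)}},K_n)=2^{\Omega(n^2)}$. The paper instead invokes the nontrivial Fox--He bound $r(\kn{s}{3},\lk{n})<(2s)^{sn}$. Finally, the paper's $s\ge 5$ lower bound is not a stepping-up but a short algebraic construction: label each pair of vertices with a uniform element of $\Z/3\Z$ and color a triple red iff its three pair-labels sum to $1\pmod 3$; double counting mod $3$ over the ten triples of any five vertices rules out a red $\kn{5}{3}$ deterministically, and a first-moment bound kills blue $\lk{n}$'s for $N=(3/2)^{n/2}$. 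Your stepping-up route would additionally need an argument that a blue star upstairs forces a large blue structure downstairs, which you have not supplied.
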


The intermediate growth rate of $r(\kn{4}{3},\lk{n})$ is in striking contrast with the so-called ``polynomial-to-exponential transition'' conjecture of Erd\H{o}s and Hajnal~\cite{ErHa}, whose exact statement is technical but roughly states that all off-diagonal hypergraph Ramsey numbers against cliques are either at most polynomial or at least exponential. This conjecture was proved to be true infinitely often when $k=3$ by Conlon, Fox and Sudakov~\cite{CoFoSu} and was then settled in the affirmative for all $k\ge 4$ by Mubayi and Razborov~\cite{MuRa}. As a corollary of Theorem~\ref{thm:main}, we see that no such transition can occur for hypergraph Ramsey numbers against stars. 

Since our results for $r(\kfm, \lk{n})$ and 
$r(\kn{s}{3},\lk{n})$ when $s \ge 5$
are straightforward applications of known methods, we will focus our attention in the remainder of the introduction on
how we estimate 
$r(\kn{4}{3},\lk{n})$. The key idea is to reduce to a novel Ramsey problem involving grids, somewhat reminiscent of the grid case of the famous cube lemma (see, e.g., \cite{CoFoLeSu}) developed by Shelah~\cite{Sh} in his proof of primitive-recursive bounds for Hales--Jewett and van der Waerden numbers. To say more, we need some further definitions.

Let the {\it $m\times n$ grid graph} $\grid{m}{n}$ be the Cartesian product $K_m \square K_n$, i.e., the graph whose vertex set is the rectangular grid $[m]\times[n]$ and whose edges are all pairs of distinct vertices sharing exactly one coordinate. If $m\ge a\ge 1$ and $n\ge b\ge 1$, then an {\it $a \times b$ subgrid} in $\grid{m}{n}$ is an induced copy of $\grid{a}{b}$. In particular, we call a $2\times 2$ subgrid, i.e., a set of four vertices $(x,y),(x,y'),(x',y'),(x',y)$ with the four axis-parallel edges between them, a {\it rectangle}. We will be interested in the Ramsey number $\gr(\grid{2}{2}, K_n)$, which is the smallest $N$ such that in any $2$-edge coloring of $\grid{N}{N}$ there is either a monochromatic red rectangle or a monochromatic blue $K_n$. For such Ramsey numbers, we show the following.

\begin{thm}
\label{thm:main-grid}
There are positive constants $c$ and $c'$ such that 
$$2^{c\log^2 n} \le \gr(\grid{2}{2}, K_n)\le 2^{c' n^{2/3}\log n}.
$$
\end{thm}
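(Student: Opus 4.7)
The plan is to prove the upper and lower bounds separately.

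For the upper bound, I would take a $2$-edge-coloring of $\grid{N}{N}$ with $N = 2^{c' n^{2/3}\log n}$ and no blue $K_n$, and argue that a red rectangle must exist. Because two adjacent vertices of $\grid{N}{N}$ necessarily share a row or column, every blue clique lives inside a single row or a single column; in particular, each row and each column, viewed as a $2$-edge-colored copy of $K_N$, is blue-$K_n$-free. The off-diagonal Ramsey bound $R(t,n)\le\binom{n+t-2}{t-1}$ then produces, in each row and each column, a red clique of size $t = \Theta(n^{2/3})$; this is precisely the regime in which $R(t,n)\le N$ is tight, and so is where the exponent $n^{2/3}$ enters. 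I would then use a Zarankiewicz-style pigeonhole on the bipartite incidence between rows and red column-pairs to locate a pair of rows $\{i,i'\}$ sharing $\Omega(N^{2}/n^{2})$ common red column-pairs, and analogously for columns. Intersecting these two structures and applying the blue-$K_n$-free hypothesis on the relevant column restrictions (with a further layer of Ramsey to refine the configuration if needed) should force four edges simultaneously red and thus a red rectangle.

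For the lower bound, I plan to build an explicit coloring by recursion. Starting from a constant-size base coloring that avoids red rectangles and blue $K_{n_{0}}$, I iteratively produce a larger coloring by combining two copies: identify the index set $[N_{1}N_{2}]$ with $[N_{1}]\times[N_{2}]$ and assign each edge of the product grid a color derived by a combining rule applied to the edges of the two factor colorings (depending on which coordinates of its endpoints agree). The combining rule should satisfy (i) every blue clique in the product projects to blue cliques in the factors, so its size is controlled by the product of factor clique parameters; and (ii) every red rectangle in the product reduces to a red rectangle in a single factor, so the no-rectangle property is inherited. With a recursion that multiplies $N_{k}$ by a factor of order $n_{k}$ while inflating $n_{k}$ by a constant factor per round, $\Theta(\log n)$ iterations yield $\log N=\Theta(\log^{2}n)$, as required.

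The main obstacle is the lower bound: one must design the combining rule so that \emph{mixed} red rectangles --- four-corner configurations whose row and column coordinates vary across both factors in a coupled way, not arising from a rectangle inside a single factor --- are precluded. Naive AND/OR rules on the edge colors admit such mixed rectangles, so the correct rule will almost certainly require an asymmetric treatment of rows versus columns in the factors, or some auxiliary structure (e.g.\ a coloring of the index set) that systematically blocks the cross configurations. Once such a rule is in place, the analyses of blue cliques and of rectangles should both decompose neatly over the factors and the recursion carries through.
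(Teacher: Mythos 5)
Both halves of your proposal have genuine gaps, and in each case the missing piece is the main idea of the paper's argument.

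For the upper bound, the Zarankiewicz step does not close numerically. With $N = 2^{c'n^{2/3}\log n}$ and a red clique of size $t=\Theta(n^{2/3})$ in each row, a row contributes only $\binom{t}{2}=O(n^{4/3})$ red column-pairs out of $\binom{N}{2}$ possible ones, so the average number of rows containing a fixed red column-pair is $O(t^2/N)\ll 1$. No convexity or K\H{o}v\'ari--S\'os--Tur\'an argument can then produce even one pair of rows sharing a common red column-pair, let alone $\Omega(N^2/n^2)$ of them; the red structure guaranteed by off-diagonal Ramsey is exponentially too sparse relative to $N$. The paper avoids this by first shrinking one dimension: it works in $\grid{N}{M}$ with $M = r(K_r,K_n)\le n^r$ and $r=n^{1/3}$ (not $n^{2/3}$), so that pigeonholing over the $\le M^r = 2^{O(n^{2/3}\log n)}$ possible positions of the red $K_r$ in each column is affordable and yields $N'\ge\sqrt{N}$ columns whose red cliques occupy the \emph{same} $r$ rows. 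In that $N'\times r$ subgrid, absence of a red rectangle forces each pair of columns to have at most one red horizontal row, i.e., an $r$-coloring of $K_{N'}$, and the decisive ingredient is then the Erd\H{o}s--Szemer\'edi theorem (Lemma~\ref{lem:erdos-szemeredi}): there is a clique of size $c_0\frac{r}{\log r}\log N'\ge n$ avoiding some color $y$, which is a blue $K_n$ in row $y$. The balance between the pigeonhole cost $M^r\approx n^{r^2}$ and the Erd\H{o}s--Szemer\'edi gain $\frac{r}{\log r}\log N$ is exactly what fixes $r=n^{1/3}$ and the exponent $n^{2/3}$; your sketch contains neither ingredient.

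For the lower bound, you have correctly identified that everything hinges on a combining rule that precludes mixed red rectangles, but you do not supply one, so this half is a plan rather than a proof. The paper does not use a recursive product construction for Theorem~\ref{thm:main-grid} at all. Instead, Lemma~\ref{lem:random-grid} builds in one shot a random subgraph $H\subseteq\grid{N}{N}$ with $N=2^{c\log^2 n}$ whose every row and column induces a $G(N,n^{-3/4})$ and which contains no rectangle. The correlation structure is the whole point: one fixes a family of $T=n^{1/2}(\log n)^{10}$ subsets $U_i\subseteq[N]$ in which every pair of elements lies in at most $\frac14\log n$ sets, together with bipartitions $P_i\sqcup Q_i$ of the columns; column $x$ restricted to $U_i$ is forced into a random graph $A_i$ or its complement $B_i$ according to the side containing $x$, and the horizontal edges between columns $x,x'$ are placed only in rows of a single randomly chosen $U_{i_{x,x'}}$ separating $x$ from $x'$. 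A rectangle would then require the same pair $(y,y')$ to be an edge of both $A_i$ and $\overline{A_i}$. The bound $N=2^{c\log^2 n}$ is precisely the largest $N$ for which the small-codegree condition on the $U_i$ (hence edge probability $\ge 2^{-\frac14\log n}=n^{-1/4}$ per column) can be maintained. Without a concrete substitute for this mechanism, the recursion you describe has no content.
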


In passing, we remark that the best known lower bound for the grid case of Shelah's cube lemma, due to Conlon, Fox, Lee and Sudakov~\cite{CoFoLeSu}, is of the form $2^{c(\log r)^{2.5}/\sqrt{\log \log r}}$ for some positive constant $c$, curiously similar to the lower bound in Theorem~\ref{thm:main-grid}, though there $r$ refers to the number of colors. However, despite the similarities, we were not able to find any nontrivial connection between the two problems.

The connection between $r(\kn{4}{3},\lk{n})$ and $\gr(\grid{2}{2}, K_n)$ is that the latter is equivalent to a natural bipartite variant of the former. 
Let $B^{(3)}(a,b)$ denote the complete bipartite $3$-graph on $[a+b]$ whose edges are all triples intersecting both $[a]$ and $[a+1, a+b]$. Observe that the $3$-graph $B^{(3)}(a,b)$ and the grid graph $\grid{a}{b}$ have the same number $a\binom{b}{2} + b\binom{a}{2}$ of edges and we can give an explicit correspondence between their edge sets by sending horizontal edges $(x,y)\sim (x',y)$ in the grid to triples $\{x,x',a+y\} \in E(B^{(3)}(a,b))$ and vertical edges $(x,y)\sim (x,y')$ to triples $\{x,a+y,a+y'\}\in E(B^{(3)}(a,b))$. It is easy to check that rectangles in the square grid $\grid{N}{N}$ correspond to copies of $\kn{4}{3}$ in the corresponding bipartite $3$-graph $B^{(3)}(N,N)$, while $n$-cliques in $\grid{N}{N}$ correspond to copies of $\lk{n}$ in $B^{(3)}(N,N)$. Thus, $\gr(\grid{2}{2}, K_n)$ is exactly equal to the smallest $N$ such that any $2$-edge-coloring of $B^{(3)}(N,N)$ contains either a red $\kn{4}{3}$ or a blue $\lk{n}$.

As $B^{(3)}(N,N)\subseteq \kn{2N}{3}$, it follows immediately that $r(\kn{4}{3},\lk{n}) \le 2\gr(\grid{2}{2}, K_n)$, so the upper bound in Theorem~\ref{thm:main-grid} implies that of Theorem~\ref{thm:main}. We do not know of a direct implication between the lower bounds, but we will be able to glue together copies of our lower-bound construction for $\gr(\grid{2}{2},K_n)$ to manufacture one for $r(\kn{4}{3}, \lk{n})$ of comparable size. This suggests, and we strongly believe, that $r(\kn{4}{3},\lk{n})$ and $\gr(\grid{2}{2}, K_n)$ are of roughly the same order.

The final result that we mention here  is a generalization of Theorem~\ref{thm:main-grid} to larger grids.

\begin{thm}\label{thm:general-grid}
There is a positive constant $c$ and, for all fixed $a\ge b \ge 2$, a positive constant $c' = c'_a$ such that 
\[
2^{c\log^2 n} \le \gr(\grid{a}{b}, K_n) \le 2^{c' n^{1-(2^b-1)^{-1}} \log n}.
\]
\end{thm}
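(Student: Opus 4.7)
The plan is to establish the lower bound directly from Theorem~\ref{thm:main-grid} and the upper bound by induction on $b$.

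\textbf{Lower bound.} For any $a \geq b \geq 2$, every induced copy of $\grid{a}{b}$ in $\grid{N}{N}$ contains an induced copy of $\grid{2}{2}$, obtained by restricting to any two of its rows and any two of its columns. Thus any $2$-edge-coloring of $\grid{N}{N}$ avoiding both a red $\grid{2}{2}$ and a blue $K_n$ a fortiori avoids a red $\grid{a}{b}$, so $\gr(\grid{a}{b}, K_n) \geq \gr(\grid{2}{2}, K_n) \geq 2^{c\log^2 n}$ by Theorem~\ref{thm:main-grid}.

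\textbf{Upper bound.} I induct on $b$, keeping $a \geq b$ as a free parameter, and write $\alpha_b := 1 - (2^b-1)^{-1}$. The base case $b = 2$ adapts the proof of the upper bound in Theorem~\ref{thm:main-grid}: wherever that argument invokes Ramsey to locate a red edge in an auxiliary graph, I invoke Ramsey instead to locate a red $K_a$, which worsens the constant $c'$ to some $c'_a$ but preserves the exponent $n^{\alpha_2}\log n = n^{2/3}\log n$.

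For the inductive step, set $N = 2^{c'_a n^{\alpha_b}\log n}$ and consider any $2$-edge-coloring of $\grid{N}{N}$ without a blue $K_n$. Apply the inductive hypothesis with an auxiliary parameter $A = A(a, n)$ chosen so that $N \geq \gr(\grid{A}{b-1}, K_n)$, producing a red $\grid{A}{b-1}$ on some row set $I_0$ and column set $J_0$. To extend this to a red $\grid{a}{b}$, define
\[
I(j_\star) := \{i \in I_0 : (i,j)(i,j_\star) \text{ is red for every } j \in J_0\}
\]
for each $j_\star \notin J_0$. If $|I(j_\star)| \geq r(K_a, K_n)$ for some $j_\star$, Ramsey applied to column $j_\star$ restricted to $I(j_\star)$ produces either a blue $K_n$ (done) or a red $K_a$, and in the latter case these $a$ rows together with $J_0 \cup \{j_\star\}$ form the required red $\grid{a}{b}$. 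By double-counting, $\sum_{j_\star \notin J_0} |I(j_\star)| = \sum_{i \in I_0} c_i(J_0)$, where $c_i(J_0)$ is the number of common red neighbors of $J_0$ in the row red graph $H_i$. Since $\overline{H_i}$ contains no $K_n$, Tur\'an gives $|H_i| \geq \binom{N}{2}/(n-1)$, and a convexity/averaging argument (combined with a mild randomization of $J_0$ within the inductively produced grid) ensures $c_i(J_0) \gtrsim N/(n-1)^{b-1}$ for many $i \in I_0$. Choosing $A \gtrsim r(K_a, K_n) \cdot (n-1)^{b-1}$ thus forces some $|I(j_\star)| \geq r(K_a, K_n)$, closing the induction.

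The main obstacle is the quantitative parameter balance. With $A$ polynomial in $n$, the inductive hypothesis contributes only a $\log n$ factor through $c'_A = O(\log A)$, leading to a recursion of the form $1/(1-\alpha_b) = 2/(1-\alpha_{b-1}) + 1$; seeded by the base case $1/(1-\alpha_2) = 3$, this has the unique solution $1/(1-\alpha_b) = 2^b - 1$, matching the exponent in the statement. The delicate points are verifying that $c'_a$ depends only on $a$ (not on $n$) throughout the induction, and coordinating the randomization/averaging step for $J_0$ with the inductive construction of $I_0$; making this interaction rigorous is the technical heart of the argument.
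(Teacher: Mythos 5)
Your lower bound is correct and is exactly what the paper does (a red $\grid{a}{b}$ contains a red $\grid{2}{2}$, so the bound is inherited from Theorem~\ref{thm:main-grid}). The upper bound, however, has two genuine gaps. First, the extension step does not work as described. You produce a red $\grid{A}{b-1}$ on rows $I_0$ and columns $J_0$ and then want $J_0$ to have $\gtrsim N/(n-1)^{b-1}$ common red neighbours in the row graph $H_i$ for many $i \in I_0$. The Tur\'an/convexity computation you invoke only controls the \emph{average} of $|N_{H_i}(J)|$ over all $(b-1)$-sets (or all red $(b-1)$-cliques) $J$ in a single $H_i$; it gives no pointwise guarantee for the one specific $J_0$ the induction hands you, and a red $(b-1)$-clique in a graph of independence number $<n$ can have tiny common neighbourhood. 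The proposed fix, ``a mild randomization of $J_0$ within the inductively produced grid,'' is circular: $I_0$ is by definition the set of rows in which $J_0$ is completely red, so varying $J_0$ changes $I_0$, and the averaging has nothing fixed to average against. This is not a technicality to be deferred; it is the crux.

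Second, the quantitative bookkeeping is asserted rather than derived. You take $A$ polynomial in $n$ and claim $c'_A = O(\log A)$, but nothing in your argument supplies a polylogarithmic dependence of the constant on the width; in any argument of this type the constant grows at least like $A^2$ (already the pigeonholing over red $K_{r}$-positions costs a factor $n^{r^2}$ with $r \ge A$), so with $A = \mathrm{poly}(n)$ the inductive bound is vacuous. Worse, the recursion $1/(1-\alpha_b) = 2/(1-\alpha_{b-1})+1$ does not emerge from any step you describe: if your extension really cost only $\mathrm{poly}(n)$ in width and $\mathrm{polylog}$ in the constant, you would get exponent $n^{2/3}$ (up to logs) for every $b$, not $n^{1-(2^b-1)^{-1}}$; the recursion has been reverse-engineered from the answer. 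The paper's actual mechanism is different in kind: it shrinks the number of all-red rows in stages $r_b > r_{b-1} > \cdots > r_1 = a$, at each stage combining the set-coloring Ramsey bound $R(n;r,r-a) \le 2^{O(na^2r^{-1}\log(r/a))}$ (Lemma~\ref{lem:general-ES}) with a supersaturation argument (Lemma~\ref{lem:general-grid-iteration}) to extract a narrower subgrid with one column completely red to the rest. The factor $a^2/r$ in that exponent is what forces $r_{i+1}\approx r_i^2$, hence $r_i = a\,x^{2^{i-1}-1}$ with $x = n^{(2^b-1)^{-1}}$, and hence the exponent $1-(2^b-1)^{-1}$. Without this tool, or something comparably strong playing the same role, the claimed bound is not reachable by the route you sketch.
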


The lower bound is an obvious corollary of the lower bound in Theorem~\ref{thm:main-grid}, while the upper bound involves some extra effort, in particular drawing on recent work of the authors on set-coloring Ramsey numbers~\cite{CoFoHeMuSuVe}. One interesting corollary of this result is that there are positive constants $c$ and $c'$ such that
\[2^{c \log^2 n} \le r(\kn{5}{3} - e, \lk{n}) \le 2^{c' n^{2/3}\log n}.\]
The lower bound here simply follows from the fact that  $r(\kn{5}{3} - e, \lk{n}) \ge  r(\kn{4}{3},\lk{n})$.
For the upper bound, note that, just as $\gr(\grid{2}{2}, K_n)$ is equivalent to a bipartite variant of $r(\kn{4}{3},\lk{n})$, we also have that $\gr(\grid{3}{2}, K_n)$ is equivalent to a bipartite variant of $r(\kn{5}{3} - e, \lk{n})$, so that $r(\kn{5}{3} - e, \lk{n}) \le 2 \gr(\grid{3}{2}, K_n)$, yielding the required upper bound. Together with the fact that $r(\kn{5}{3}, \lk{n}) = 2^{\Theta(n)}$, this gives a more complete picture of the transition window.

Throughout the paper, for the sake of clarity of presentation, we systematically omit floor and ceiling signs whenever they are not essential. Moreover, unless otherwise specified, all logarithms are base $2$.

\section{Basic bounds}

In this short section, we prove Proposition~\ref{prop:ll} and the second part of Theorem~\ref{thm:main}.
We will use the Lov\'asz Local Lemma in the following standard form (see, e.g., \cite[Lemma 5.1.1]{AlSp}).

\begin{lem}[Lov\'asz Local Lemma]\label{lem:LLL}
Let $A_1, A_2, \ldots, A_n$ be events in an arbitrary probability space. A directed graph $D=(V,E)$ on the set of vertices $V=[n]$ is called a dependency graph for the events $A_1,\ldots, A_n$ if for each $i$, $1\le i\le n$, the event $A_i$ is mutually independent of all the events $\{A_j : (i,j)\not \in E\}$. Suppose that $D=(V,E)$ is a dependency digraph for the above events and suppose there are real numbers $x_1,\ldots,x_n$ such that $0\le x_i < 1$ and $\Pr[A_i]\le x_i \prod_{(i,j)\in E} (1-x_j)$ for all $1\le i \le n$. Then $\Pr[\bigwedge_{i=1}^n \overline{A_i}] \ge \prod_{i=1}^n (1-x_i)$. In particular, with positive probability no event $A_i$ holds.
\end{lem}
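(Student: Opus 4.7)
The plan is to carry out the standard induction-on-conditioning-set proof, whose engine is a strengthened conditional-probability claim rather than the stated inequality itself. Specifically, I would prove by induction on $|S|$ that for every $i\in[n]$ and every $S\subseteq[n]\setminus\{i\}$,
\[\Pr\bigl[A_i \,\bigm|\, \bigwedge_{j\in S}\overline{A_j}\bigr]\le x_i.\]
The base case $|S|=0$ is immediate from the hypothesis of the lemma, since every factor $1-x_j$ lies in $(0,1]$.

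For the inductive step, I would partition $S=S_1\cup S_2$, where $S_1=\{j\in S:(i,j)\in E\}$ and $S_2=S\setminus S_1$. If $S_1=\emptyset$, the mutual independence built into the dependency-graph definition collapses the conditional probability to $\Pr[A_i]\le x_i$. Otherwise, I would apply Bayes' rule in the form
\[\Pr\bigl[A_i\,\bigm|\,\bigwedge_{j\in S}\overline{A_j}\bigr]=\frac{\Pr\bigl[A_i\wedge\bigwedge_{j\in S_1}\overline{A_j}\,\bigm|\,\bigwedge_{k\in S_2}\overline{A_k}\bigr]}{\Pr\bigl[\bigwedge_{j\in S_1}\overline{A_j}\,\bigm|\,\bigwedge_{k\in S_2}\overline{A_k}\bigr]}.\]
The numerator I would bound above by dropping the $\overline{A_j}$ factors and invoking mutual independence of $A_i$ from $\{A_k:k\in S_2\}$ to obtain $\Pr[A_i]\le x_i\prod_{(i,j)\in E}(1-x_j)$. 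The denominator I would bound below by writing it as a telescoping product via the chain rule and applying the inductive hypothesis to each factor (each conditioning set then has size at most $|S|-1$), giving $\prod_{j\in S_1}(1-x_j)$. The quotient is at most $x_i$ since $S_1\subseteq\{j:(i,j)\in E\}$ and the surviving factors in the numerator are at most $1$.

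Once this claim is in hand, the lemma itself follows from one more application of the chain rule to the joint event:
\[\Pr\bigl[\bigwedge_{i=1}^n\overline{A_i}\bigr]=\prod_{i=1}^n\Pr\bigl[\overline{A_i}\,\bigm|\,\bigwedge_{j<i}\overline{A_j}\bigr]\ge\prod_{i=1}^n(1-x_i),\]
where the inequality uses $\Pr[\overline{A_i}\mid\,\cdot\,]=1-\Pr[A_i\mid\,\cdot\,]\ge 1-x_i$ by the claim. Since each $x_i<1$, this product is strictly positive, yielding the ``in particular'' conclusion.

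The one delicate point — not really an obstacle, but the spot requiring care — is the numerator step, where I need $A_i$ to be \emph{mutually} independent (rather than merely pairwise independent) of the family $\{A_k:k\in S_2\}$, so that conditioning on an arbitrary Boolean combination of these events leaves $\Pr[A_i]$ unchanged. This is precisely the content of the dependency-graph hypothesis and is the only place it enters the proof. It is also why one must establish the strengthened conditional claim by induction, rather than attempting to prove the displayed inequality directly.
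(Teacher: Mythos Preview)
Your proof is correct and is exactly the standard induction-on-$|S|$ argument; the paper does not actually prove this lemma but simply quotes it from Alon--Spencer~\cite[Lemma 5.1.1]{AlSp}, where the same proof appears. There is nothing to add.
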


The proof of Proposition~\ref{prop:ll} is now a direct application of Lemma~\ref{lem:LLL} to a suitable random $3$-graph.

\begin{proof}[Proof of Proposition~\ref{prop:ll}.]
The upper bound follows from the fact that $r(\kfm, \lk{n}) \le r(K_3, K_n) + 1$, by specializing to the link of a single vertex, and then applying the bound $r(K_3, K_n) = O(\frac{n^2}{\log n})$ due to Ajtai, Koml\'os and Szemer\'edi~\cite{AjKoSz}.

For the lower bound, we will use Lemma~\ref{lem:LLL}, assuming throughout that $n$ is sufficiently large. Consider a random hypergraph $\Gamma = G^{(3)}(N, p)$ on $N = 10^{-3}n^2 (\log n)^{-2}$ vertices (which we identify with $[N]$ for convenience) with $p = 4(\log n)/n$. We would like to show that, with positive probability, $\Gamma$ contains no $\kfm$ and its complement $\overline{\Gamma}$ contains no $\lk{n}$. Let $S$ be the collection of all $N{N-1 \choose 3}$ copies of $\kfm$ on $[N]$ and, for $s\in S$, let $A_s$ be the event that $s \subseteq \Gamma$. Let $T$ be the collection of all $N{N-1 \choose n}$ copies of $\lk{n}$ on $[N]$ and, for $t\in T$, let $B_t$ be the event that $t\subseteq \overline{\Gamma}$. The probabilities of these events are $\Pr[A_s] = p^3$ and $\Pr[B_t] = (1-p)^{n \choose 2}$. 

Let $D$ be the digraph whose vertex set is $S \cup T$ and whose edges are pairs $(i,j) \in (S\cup T)^2$ which intersect in at least one edge. Thus, $D$ is a dependency digraph for the events $\{A_s\}_{s\in S} \cup \{B_t\}_{t\in T}$. Each vertex in $S$ is adjacent to at most $9N$ other vertices in $S$ and at most $|T|$ vertices in $T$, while each vertex in $T$ is adjacent to at most ${n \choose 2}3N<2n^2N$ vertices in $S$ and at most $|T|$ vertices in $T$. Let $x = 3p^3$ be the local lemma weight for all the $A_s$ events and $y = 1/|T|$ be the local lemma weight for all the $B_t$ events. With this choice of parameters and using that $n$ is sufficiently large and $|T|=N{N-1 \choose n}<N(eN/n)^n$, we have 
\begin{align*}
    x (1-x)^{9N}(1-y)^{|T|} & \ge p^3, \\
    y (1-x)^{2n^2 N} (1-y)^{|T|} & \ge (1-p)^{n \choose 2},
\end{align*}
so the conditions of Lemma~\ref{lem:LLL} are satisfied. Thus, with positive probability none of the events $A_s$ or $B_t$ hold and we obtain $r(\kfm, \lk{n}) \ge N$, as desired.
\end{proof}

The close connection between $r(\kfm, \lk{n})$ and $r(K_3, K_n)$ suggests that $r(\kfm, \lk{n})=\Theta(\frac{n^2}{\log n})$. It seems likely that a proof of this may be possible through a careful analysis of the $(\kfm)$-free process (see, e.g., \cite{BoMuPi} for results of this type in a similar context). However, we have chosen not to pursue this here.

We now prove the bounds on $r(\kn{s}{3}, \lk{n})$ for $s \ge 5$ stated in Theorem~\ref{thm:main}.

\begin{proof}[Proof of Theorem~\ref{thm:main} for $s\ge 5$.]
The upper bound follows from \cite[Theorem 1.4]{FoHe}, which says that \[
r(\kn{s}{3},\lk{n}) < (2s)^{sn}
\]
for all $s,n \ge 3$. The lower bound construction is as follows. Let $K_N$ be a complete graph on $N$ vertices, labelled $v_1,\ldots, v_N$, and let $\phi$ be a $3$-coloring of the edges of $K_N$ which independently colors each edge by a uniform random element of $\Z/3\Z$. If $\kn{N}{3}$ is a complete $3$-graph on the same vertex set, define a $2$-coloring $\chi$ of the edges of $\kn{N}{3}$ where $\chi(v_i, v_j, v_k)$ is red if $\phi (v_i, v_j)+\phi (v_i, v_k) + \phi (v_j,v_k) \equiv 1\pmod 3$ and blue otherwise.

Suppose, for the sake of contradiction, that a red $\kn{5}{3}$ appears in the coloring $\chi$ at vertices $u_1,\ldots, u_5$. If we sum up $\phi(u_i, u_j)+\phi(u_i,u_k)+\phi(u_j, u_k)$ across all $\binom{5}{3}=10$ triples of these vertices, the sum is $1\pmod 3$, since each triple sums to $1\pmod 3$. On the other hand, each summand $\phi(u_i,u_j)$ appears three times in this sum, so the total sum of all these triples must be $0\pmod 3$. This is a contradiction, so no such coloring can have a red $\kn{5}{3}$.

Next, we show that for $N=(3/2)^{n/2}$ and $n \geq 5$, the probability of a blue $\lk{n}$ appearing in $\chi$ is less than $1$. Indeed, consider any given copy of $\lk{n}$ in $\kn{N}{3}$, with central vertex $u$ and clique $w_1,\ldots, w_n$ in the link of $u$. In order for every edge in this copy of $\lk{n}$ to be blue, every color $\phi(w_i,w_j)$ must satisfy $\phi(w_i,w_j) + \phi(u,w_i) + \phi(u,w_j) \not\equiv 1\pmod 3$. Each such event is independent with probability $2/3$, so we obtain that the probability this particular copy of $\lk{n}$ is blue in $\chi$ is exactly $(2/3)^{\binom{n}{2}}$. As there are $N{N-1 \choose n}$ copies of $\lk{n}$ in $\kn{N}{3}$, we see that the expected number of blue $\lk{n}$ is  $$N{N-1 \choose n}\cdot (2/3)^{\binom{n}{2}} < N(eN/n)^n(2/3)^{n(n-1)/2}=(3e/2n)^n < 1.$$ Hence, there is such a coloring with no blue $\lk{n}$.
\end{proof}

\section{The lower bound}

The key ingredient for our lower bound on $r(\kn{4}{3},\lk{n})$ is the following lemma, which states that it is possible to construct a random subgraph of the grid where each row and column looks like a sparse Erd\H{o}s--R\'enyi random graph, but they are coupled in such a way that there are no rectangles and their edge unions are sparse.

\begin{lem}
\label{lem:random-grid}There exists a positive constant $c$ such that, for all $n$
sufficiently large and $N=2^{c \log^{2} n}$, there is a random
subgraph $H\subseteq \grid{N}{N}$ with the following properties:

\begin{enumerate}[label={(\arabic*)}]
\item For every row $r_{y}$, $H[r_{y}]\sim G(N,n^{-3/4})$, i.e., the marginal
distribution of the induced subgraph $H[r_{y}]$ is $G(N,n^{-3/4})$.
Similarly, for every column $c_{x}$, $H[c_{x}]\sim G(N,n^{-3/4})$.
\item There are no rectangles, i.e., no $x, x', y, y'$ with $(x,y)\sim(x,y')\sim(x',y')\sim(x',y)\sim(x,y)$,
in $H$.
\item The edge union of all the row graphs $H[r_{y}]$ lies in a $G(N,n^{-1/8})$.
Similarly, the edge union of all the column graphs $H[c_{x}]$ lies in a
$G(N,n^{-1/8})$.
\end{enumerate}

\end{lem}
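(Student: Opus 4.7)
The plan is to build $H$ in two stages on top of two sparse templates, and then view the whole construction through the bipartite 3-graph language of the introduction. First sample independent templates $T_X,T_Y\sim G(N,n^{-1/8})$ on $[N]$; these serve as upper envelopes for $\bigcup_y H[r_y]$ and $\bigcup_x H[c_x]$ respectively, so condition~(3) is automatic as soon as every row-edge sits in $T_X$ and every column-edge in $T_Y$. For each row $y$ I would then sample $R_y:=H[r_y]\subseteq T_X$ by including each $e\in T_X$ with conditional probability $n^{-5/8}$, and analogously for each column with $T_Y$. With this choice, $\Pr[\{x,x'\}\in R_y]=n^{-1/8}\cdot n^{-5/8}=n^{-3/4}$ and the events across different edges are independent, so the marginal of $R_y$ is the desired $G(N,n^{-3/4})$ from~(1), and similarly for each $C_x$.

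The only remaining condition is~(2), no rectangles. Under the correspondence described in the introduction, the construction is really a random bipartite 3-graph $\mathcal G\subseteq B^{(3)}(N,N)$, and a rectangle in $H$ is precisely a copy of $\kn{4}{3}$ in $\mathcal G$; in other words, the task reduces to designing a random $\kn{4}{3}$-free bipartite 3-graph with the prescribed link marginals. With fully independent sub-sampling as above the expected number of rectangles is $\tfrac14 N^4(n^{-3/4})^4/(n^{-1/8})^2=\tfrac14 N^4 n^{-11/4}$, which exceeds $1$ once $N\gg n^{11/16}$. So at the target scale $N=2^{c\log^2 n}$ independent Bernoulli activations are hopeless, and the choices in different rows (and in different columns) must be \emph{coupled} so that the four activations making up any rectangle cannot fire simultaneously.

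To implement the coupling I would proceed by iterative amplification: fix a base scale $N_0=n^{O(1)}$ at which the independent construction plus a Lov\'asz Local Lemma or alteration argument on the rectangle events already produces an $H_0$ on $\grid{N_0}{N_0}$ satisfying~(1)--(3) at the target densities, and then recursively blow up $H_k$ on $\grid{N_k}{N_k}$ to a new random instance $H_{k+1}$ on $\grid{N_{k+1}}{N_{k+1}}$ with $N_{k+1}=N_k\cdot n^{\Theta(1)}$, arranged so that any new rectangle must project either to a rectangle in $H_k$ or to a rectangle inside a fresh copy of $H_0$ glued into one of the coarse-scale cells. After $\Theta(\log n)$ rounds one reaches $N=n^{\Theta(\log n)}=2^{\Theta(\log^2 n)}$, the target bound. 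At each round, the Bernoulli filters compose multiplicatively so that the marginal of $R_y$ stays exactly $G(N,n^{-3/4})$, and the row-union envelope is kept inside a $G(N,n^{-1/8})$ by a parallel composition on the template side.

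The hard part is the recursion step itself: one has to design the blow-up so that the Bernoulli marginals from~(1), the sparse envelope from~(3), and the rectangle-freeness from~(2) are all simultaneously preserved across $\log n$ scales of refinement. The delicate combinatorial piece is ruling out ``cross-scale'' rectangles, and any small loss in the envelope densities per round is amplified $\log n$ times, so careful bookkeeping of the density parameters is essential. I expect the authors either use a clean algebraic blow-up (for instance via partial Steiner systems or a random polynomial model over a finite field) or else exploit the bipartite 3-graph structure directly to make rectangle-freeness behave compositionally under the recursion.
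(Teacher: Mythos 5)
Your setup for conditions (1) and (3) (the two sparse templates plus a second Bernoulli filter, giving exact marginals $n^{-1/8}\cdot n^{-5/8}=n^{-3/4}$) matches the paper, and your counting argument showing that fully independent activations produce rectangles at the target scale is a correct diagnosis. But the entire content of the lemma is condition (2), and your proposal does not construct the coupling that enforces it: you reduce the problem to ``design a random $\kn{4}{3}$-free bipartite $3$-graph with the prescribed link marginals,'' which is a restatement of the lemma, and then sketch a multi-scale blow-up whose recursion step you explicitly leave open. There is no evidence the recursive scheme closes; in particular you give no mechanism by which rectangle-freeness ``behaves compositionally,'' and ruling out cross-scale rectangles (which you flag as the delicate point) is exactly where such a scheme would have to do the work that is missing.

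The paper's construction is single-shot, not recursive, and the key mechanism is a \emph{complementarity} trick. One fixes a family of subsets $U_1,\dots,U_T\subseteq[N]$ of the rows (built by the probabilistic method) such that every pair of rows lies in at most $\tfrac14\log n$ common sets, together with random bipartitions $P_i\sqcup Q_i$ of the columns, and for each $i$ a random graph $A_i$ on $U_i$ with complement $B_i=\overline{A_i}$. A column $x$ is only allowed to contain the vertical edge $(y,y')$ if, for every $U_i$ containing both $y$ and $y'$, that edge lies in $A_i$ (when $x\in P_i$) or in $B_i$ (when $x\in Q_i$); since each pair $y,y'$ is constrained by at most $\tfrac14\log n$ independent fair coins, this costs only a factor $n^{-1/4}$ in density, which is what forces $N=2^{c\log^2 n}$. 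Horizontal edges between columns $x,x'$ are then placed only in the rows of a single $U_{i_{x,x'}}$, where $i_{x,x'}$ is a random index with $x,x'$ on opposite sides of the $i_{x,x'}$-th bipartition. A rectangle on $\{x,x'\}\times\{y,y'\}$ would need both vertical edges with $y,y'\in U_{i_{x,x'}}$, but the two columns' restrictions to $U_{i_{x,x'}}$ are subgraphs of the complementary graphs $A_{i_{x,x'}}$ and $B_{i_{x,x'}}$, so at most one of them contains $(y,y')$; rectangles are excluded deterministically, not by a union bound or local lemma. This is the idea your proposal is missing.
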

Note that properties (1) and (2) of the random graph $H$ 
are already enough to prove the lower bound in Theorem~\ref{thm:main-grid},
since if we let $H$ determine the set of red edges in $\grid{N}{N}$ and its complement
the blue edges, then (2) shows that there are no red rectangles
and (1) implies that w.h.p.~there are no blue $K_{n}$. We will use the additional property (3) to prove the lower bound on $r(\kn{4}{3},\lk{n})$ in Theorem~\ref{thm:main}.

\begin{proof}[Proof of Lemma \ref{lem:random-grid}.]
We give a construction for $H$ which starts by choosing the column graphs in such a way that every pair of columns is edge-disjoint on some large vertex subset. This then allows us to place many edges between these columns without creating a rectangle.

\vspace{2mm}
\noindent
\textbf{Setting up. }Let $N=2^{c \log^{2} n}$. We
start by picking a family of subsets $\{U_{i}\}_{i\le T}$ of $[N]$
with $T=n^{1/2}(\log n)^{10}$ such that each element $y$ lies in $d(y)$ sets, where $d(y) \in [\frac12 (\log n)^{10}, \frac32(\log n)^{10}]$, and every pair of elements lies in at most $\frac{1}{4}\log n$
 subsets. Such a family exists
by the probabilistic method. To see this, pick the sets $U_{i}$  independently 
such that each $j\in[N]$ lies in $U_{i}$ independently with probability
$n^{-1/2}$. The expected value of $d(y)$ is $(\log n)^{10}$. The multiplicative Chernoff bound for a binomial random variable $X$ implies that $\Pr(|X-\E X| \ge \frac12 \E X) < 2\exp(-\frac{1}{12}\E X)$. Together with a union bound over the $N$ elements $y$, this yields that the probability there exists $y$ with $d(y) \not\in [\frac12 (\log n)^{10}, \frac32(\log n)^{10}]$ is less than $N \cdot 2\exp(-\frac{1}{12}(\log n)^{10}) < \frac12$ for $n$ large. The probability that distinct elements $y,y'$ are both in a given
$U_{i}$  is $n^{-1}$, so the probability they are both in
at least $\frac{1}{4}\log n$ of the $U_{i}$ 
is at most
\[
T^{\frac{1}{4}\log n}\cdot(n^{-1})^{\frac{1}{4}\log n}\le2^{-\frac{1}{16} \log^{2} n}
\]
for $n$ large enough. Hence, for $c$ sufficiently small, the probability that there exists a pair of vertices $y,y'$ that lies in at least 
$\frac14 \log n$ sets is at most ${N \choose 2} 2^{-\frac{1}{16} \log^{2} n} < \frac12$. Therefore, the required family $\{U_{i}\}_{i\le T}$ exists. 

Next we show that there is a collection of bipartitions $P_{i}\sqcup Q_{i}=[N]$, one for each $1\le i \le T$, of the set of columns satisfying the following two properties:

\begin{enumerate}[(a)]
    \item Every pair
of columns $x,x'$ lie on opposite sides of $(\frac{1}{2}+o(1))T$
bipartitions. \label{property:pair}
    \item For every horizontal edge $(x,y)\sim(x',y)$, the number of $i$ for which $y\in U_{i}$ and $x,x'$ lie on opposite sides of the bipartition $P_{i}\sqcup Q_{i}$ is $\Theta((\log n)^{10})$.\label{property:edge}
\end{enumerate}
To see that properties \ref{property:pair} and \ref{property:edge} can be satisfied simultaneously, we show that for a random choice of the bipartitions $P_{i}\sqcup Q_{i}=[N]$, both properties hold with high probability. Indeed, if $D_{x,x'}$ is the set of all $i$ for which $x,x'$ lie on opposite sides of the bipartition $P_{i}\sqcup Q_{i}$, then $|D_{x,x'}| \sim \bin(T, 1/2)$ for all choices of $x$ and $x'$.  By the Chernoff bound,
\[
\Pr[|\bin(T,1/2) - T/2| > \eps T] < e^{-\Omega_\eps(T)},
\]
so even after a union bound over all $\binom{N}{2} = e^{O(\log ^2 n)}$ choices of $x$ and $x'$, we have that w.h.p.~$|D_{x,x'}| = (1/2 + o(1))T$ for all $x,x'$. That is, property~\ref{property:pair} holds w.h.p. To check that property~\ref{property:edge} also holds w.h.p., note that another application of the Chernoff bound shows that if $D_{x,x'}(y)$ is the set of all $i \in D_{x,x'}$ satisfying the additional condition that $y\in U_i$, then $|D_{x,x'}(y)| \sim \bin(d(y),1/2)$ must be tightly concentrated around $d(y)/2 = \Theta((\log n)^{10})$, even after taking a union bound over all choices of $x, x'$ and $y$. We may therefore fix a partition $P_i \sqcup Q_i$ for each $i \in [T]$ such that the collection of such partitions satisfies (a) and (b).

To force property (3), we sample two random graphs $R\sim G(N,n^{-1/8})$
and $C\sim G(N,n^{-1/8})$ in advance; we will make sure that the rows
of $H$ only take edges from $R$ and the columns of $H$ only take
edges from $C$. Finally, for each $i \in [T]$, let  $A_{i}=G(|U_{i}|,1/2)$ be a random graph on vertex set $U_i$ and let $B_{i}=\overline{A_{i}}$ be the edge-complement of $A_{i}$. 

We emphasize here that for each $i \in [T]$, the sets $U_i$ and the pairs $(P_i, Q_i)$ are now fixed. Our goal is to define a probability space (a random subgraph $H \subset \grid{N}{N}$) and, thus, all probabilistic statements that follow are with respect to the product space $\left( \prod A_i \right) \times R \times C$. 

\vspace{2mm}
\noindent
\textbf{The columns. }We first decide the columns of $H$. Let $c_{x}$
be the column indexed $x$ in $\grid{N}{N}$. We define $H_{x}$
to be the (random) graph with vertex set $c_x \cong [N]$ such that $(y,y')$ is an edge of $H_{x}$ 
if and only if, for every $U_i$ containing both $y$ and $y'$, either $x \in P_i$ and $(y,y')\in E(A_i)$ or $x\in Q_i$ and $(y,y')\in E(B_i)$.
 In words, on each column we stipulate
that in each of the subsets $U_{i}$, the induced subgraph $H_{x}[U_{i}]$
is a subgraph of one of the two complementary random graphs $A_{i}$
or $B_{i}$, according to which part of the partition $P_{i}\sqcup Q_{i}$
the $x$-coordinate falls into.

Let $E_x(y, y')$ be the event that a given edge $(y, y')$ appears in the random graph $H_x$. By definition, $E_x(y, y')$ occurs if and only if for every $U_i$ containing both $y$ and $y'$, either $x \in P_i$ and $(y,y')\in E(A_i)$ or $x\in Q_i$ and $(y,y')\in E(B_i)$. We have 
$$\Pr[(x \in P_i \wedge (y,y')\in E(A_i)) \bigvee (x\in Q_i \wedge (y,y')\in E(B_i))] = 1/2.$$
There are at most $\frac{1}{4} \log n$ choices of $i$ for which $U_i$ contains both $y$ and $y'$  and these events are independent over $i$. Thus, $\Pr[E_x(y,y')] \ge 2^{-\frac{1}{4}\log n}=n^{-1/4}$. We observe further that $E_x(y,y')$ depends only on the randomness of the single edge $(y,y')$ in $A_i$ and $B_i$, so, for a fixed $x$, these events are mutually independent as $(y,y')$ varies through the possible edges of $H_x$. Thus, we may choose a random subgraph
$H'_{x}\subseteq H_{x}$ with distribution exactly $G(N,n^{-5/8})$.
Finally, we take $H[c_{x}]=H'_{x}\cap R$, which is a random graph with distribution exactly $G(N,n^{-3/4})$, 
proving properties (1) and (3) for the columns.

\vspace{2mm}
\noindent
\textbf{The rows.} Next, we define the horizontal edges of $H$ by
picking the edges between each pair of columns independently. For
each pair of columns $c_{x}$, $c_{x'}$, recall that $D_{x,x'}$ is the
set of all $i\in[N]$ for which $x,x'$ fall on opposite sides of the 
partition $P_{i}\sqcup Q_{i}$. By our choice of the bipartitions,
we know that $|D_{x,x'}|=(1/2+o(1))T$. For each pair $x,x'$, pick
a uniform random $i_{x,x'}\in D_{x,x'}$. Now, in each row $y$, let
$H_{y}$ be the random graph whose edges are exactly those pairs $(x,x')$
for which $U_{i_{x,x'}}\ni y$. A given edge $(x,x')$ appears in $H_y$ if and only if the random index $i_{x,x'}$ is chosen to be one of the $d(y) \in [\frac12 (\log n)^{10}, \frac32(\log n)^{10}]$ indices $i\in D_{x,x'}$ for which $y \in U_i$, while $i_{x,x'}$ is uniform out of $|D_{x,x'}| = (\frac{1}{2}+o(1))T = (\frac{1}{2}+o(1))n^{1/2}(\log n)^{10}$ choices. Thus, each edge appears in $H_{y}$ with probability $\Theta(n^{-1/2})$. Furthermore, for fixed $y$ these events are mutually independent over all choices of possible $(x,x')$, since the random indices $i_{x,x'}$ are chosen independently. We may therefore find a random subgraph
$H'_{y}\subseteq H_{y}$ with distribution exactly $G(N,n^{-5/8})$.
Finally, we take $H[r_{y}]=H'_{y}\cap C$, which is again a random graph with distribution exactly $G(N,n^{-3/4})$, proving properties (1) and (3) for the rows.

\vspace{2mm}
\noindent
\textbf{Property (2).} Suppose that there is a rectangle in $H$, say $(x,y),(x,y'),(x',y),(x',y')$. By the way we picked the horizontal
edges, this means that, for $i=i_{x,x'}$, we have $y,y'\in U_{i}$
and $x,x'$ fall on opposite sides of the bipartition $P_{i}\sqcup Q_{i}$.
If, say, $x\in P_{i}$ and $x'\in Q_{i}$, then we see that $H[c_{x}][U_{i}]\subseteq A_{i}$
and $H[c_{x'}][U_{i}]\subseteq B_{i}$ are disjoint graphs on the
set $U_{i}$, so at most one of the two vertical edges $(x,y)\sim(x,y')$
and $(x',y)\sim(x',y')$ can lie in $H$. Hence, there are no
rectangles in $H$, as desired.
\end{proof}

Now that Lemma \ref{lem:random-grid} is proved, we layer $\log N$
copies of this bipartite construction on top of each other to obtain the lower bound on $r(\kn{4}{3},\lk{n})$ in
Theorem \ref{thm:main}, namely, $r(\kn{4}{3},\lk{n}) \geq 2^{c \log^2 n}$ for some $c > 0$.

\begin{proof}[Proof of the lower bound on $r(\kn{4}{3},\lk{n})$.]
 For $N$ as in Lemma \ref{lem:random-grid}, draw $t=\log N$ independent
samples $H_{1},\ldots,H_{t}$ from the distribution $H$. Identify the vertices of $\kn{N}{3}$
with $[N]\coloneqq\{0,\ldots,N-1\}$ (we use this convention so that each vertex has at most $t$ bits when written in binary).

Each $H_{\ell}$, $1\le\ell\le t$,
gives rise to a two-edge-coloring $\chi_{\ell}$ of a bipartite subgraph
of $\kn{N}{3}$ as follows. Let $\ell(i_{1},i_{2},i_{3})$ denote
the maximum binary bit on which three distinct $i_{1},i_{2},i_{3}\in[N]$
do not agree. Let $\Gamma_{\ell}$ denote the spanning subgraph of
$\kn{N}{3}$ consisting of all edges with $\ell(i_{1},i_{2},i_{3})=\ell$.
For clarity, we write the vertices of an edge in $\Gamma_{\ell}$
as $\{x,y,y'\}$ if $x$ is the vertex which is $0$ on bit $\ell$
and $y,y'$ are the vertices which are $1$, calling these ``vertical
edges'', and as $\{x,x',y\}$ if $x,x'$ are $0$ on bit
$\ell$ and $y$ is $1$, calling these ``horizontal edges''. Let
$\chi_{\ell}$ denote the coloring of $\Gamma_{\ell}$ for which vertical
edges $\{x,y,y'\}\in\Gamma_{\ell}$ are colored red if and only if $(x,y)\sim(x,y')$
is an edge of $H_{\ell}$ and horizontal edges $\{x,x',y\}\in\Gamma_{\ell}$
are colored red if and only if $(x,y)\sim(x',y)$ is an edge of $H_{\ell}$.

We first claim that the colorings $\chi_{\ell}$ contain no red copies of
$\kn{4}{3}$. Indeed, since $\Gamma_\ell$ is bipartite, a copy of $\kn{4}{3}$ in $\Gamma_\ell$ must lie on four vertices $\{x,x',y,y'\}$ where $x,x'$ are $0$ on bit $\ell$ and $y,y'$ are $1$ on bit $\ell$. This induced subhypergraph is red if and only if the four edges $\{x,y,y'\}$, $\{x',y,y'\}$, $\{x,x',y\}$ and $\{x,x',y'\}$ are all red in $\chi_\ell$. This in turn means that the four edges $(x,y)\sim (x,y')$, $(x',y)\sim (x',y')$, $(x,y)\sim (x',y)$ and $(x,y') \sim (x',y')$ are edges of $H_\ell$, forming a rectangle in $H$ and contradicting property (2) of Lemma~\ref{thm:main-grid}. Thus, no red $\kn{4}{3}$ appears in any of the colorings $\chi_{\ell}$.

Write $N_{\ell}(v)$ for the set of all $u\in[N]$
which disagree with $v$ on bit $\ell$ but agree on all higher bits
and write $L_{\ell}(v)$ for the link of $v$ in $\Gamma_\ell$ restricted to $N_{\ell}(v)$.
By the definition of $\Gamma_{\ell}$, $L_{\ell}(v)$ is a complete graph. Moreover, the coloring $\chi_{\ell}$ induces a coloring
on $L_{\ell}(v)$ for each $v$, which, by property (1) of Lemma~\ref{lem:random-grid},
has red edges distributed as in $G(N,n^{-3/4})$. 

We now build a coloring $\chi$ of $\kn{N}{3}$ out of the colorings
$\chi_{\ell}$ as follows. Note that $\Gamma_{1},\ldots,\Gamma_{t}$
form an edge partition of $\kn{N}{3}$. As a starting point, we
let $\chi'(e)=\chi_{\ell}(e)$ for that $\ell$ such that $e\in\Gamma_{\ell}$.
However, this coloring $\chi'$ may now contain some red $\kn{4}{3}$,
so we modify it as follows. For each red $\kn{4}{3}$ in $\chi'$, say with vertices $\{i_{1},i_{2},i_{3},i_{4}\}$, 
mark the triple of vertices $\{i_{1},i_{2},i_{3}\}$ that has the smallest value $\ell$ of $\ell(i_{1},i_{2},i_{3})$. To see that this
triple is unique, suppose it were not and $\ell(i_{1},i_{2},i_{3})=\ell(i_{1},i_{2},i_{4}) = \ell$.
But then all four vertices agree on all higher bits than $\ell$,
so the $\ell$-values of all four $3$-tuples are at most $\ell$
by definition. Thus, all four $3$-tuples among $\{i_{1},i_{2},i_{3},i_{4}\}$
lie in $\Gamma_{\ell}$ and this is a red $\kn{4}{3}$ in the coloring
$\chi_{\ell}$ of $\Gamma_{\ell}$, which is a contradiction. We also observe that if $\{i_1, i_2, i_3\}$ is marked by a red clique on $\{i, i_1, i_2, i_3\}$, then $\ell(i, i_1, i_2) = \ell(i,i_1, i_3) = \ell(i,i_2,i_3) = \ell'$ for some $\ell'>\ell$. That is, the other three edges in this red clique all belong to the same $\chi_{\ell'}$, so we may say that the edge $\{i_1, i_2, i_3\}$ is {\it marked by level $\ell'$} (note that a single edge can be marked by multiple levels). The
coloring $\chi$ is now defined as follows: the red edges of $\chi$
are exactly the unmarked red edges of $\chi'$.

We claim that $\chi$ is a coloring of $\kn{N}{3}$ that contains
no red $\kn{4}{3}$ and, with positive probability, no $\lk{nt}$.
Since every $\kn{4}{3}$ which is red under $\chi'$ contains a marked triple,
$\chi$ indeed has no red $\kn{4}{3}$. It remains to bound the probability of finding a blue $\lk{nt}$.

Fix a vertex $u\in V(\kn{N}{3})$ and suppose a blue $\lk{nt}$
appears in $\chi$ with $u$ as the central vertex. The sets $N_{1}(u),\ldots,N_{t}(u)$ form a partition
of $V(\kn{N}{3})$, so at least one of these contains at least $n$
of the vertices of our $K_{nt}$. Thus, for some $\ell$, there must be $v_{1},\ldots,v_{n}\in N_{\ell}(u)$
forming a blue $\lk{n}$ with $u$ as the central vertex. By the union bound, it will suffice to show that the probability of such an occurrence is smaller than
$N^{-n-1}$. 

Let $\phi$ be the coloring of the copy of $K_{n}$ formed by the  vertices of this copy of $\lk{n}$ other than $u$, with colors given by
$\phi(v_{i},v_{j})=\chi(u,v_{i},v_{j})$.
By construction, the red edges in $\phi$
correspond to red edges in $L_{\ell}(u)$ in $\chi_{\ell}$ that
are unmarked. We first bound the number of marked edges. For each $\ell' > \ell$, let $M_{\ell'}$ be the graph on $\{v_1,\ldots ,v_n\}$ whose edges are pairs $\{v_i, v_j\}$ for which $\{u, v_i, v_j\}$ is marked by level $\ell'$, as defined previously. In other words, $v_i \sim v_j$ in $M_{\ell'}$ if and only if there exists a fourth vertex $w$ for which $\{w, u, v_i\}$, $\{w, u, v_j\}$ and $\{w, v_i, v_j\}$ are all red in $\chi_{\ell'}$. 

We claim that for each level $\ell'>\ell$, the graph $M_{\ell'}$ is contained inside a copy of $G(n,n^{-1/8})$. Indeed, because of property (3) of Lemma~\ref{lem:random-grid} and the definition of $\chi_{\ell'}$, there exists a random graph $M'_{\ell'}\sim G(n,n^{-1/8})$ such that if $\{w, v_i, v_j\}$ is red for any $w$, then $\{v_i, v_j\}$ is an edge of $M'_{\ell'}$. In particular, if $\{v_i, v_j\}$ is an edge of $M_{\ell'}$, then $\{w,v_i,v_j\}$ is red in $\chi_{\ell'}$ for some $w$ and thus $\{v_i, v_j\}$ is an edge of $M'_{\ell'}$ as well. That is, $M_{\ell'}$ is a spanning subgraph of $M'_{\ell'}$, which has distribution $G(n,n^{-1/8})$. 

Whether an edge is marked by a particular level is independent for each level,
so the edge union of the graphs $M_{\ell'}$ with $\ell' > \ell$ is contained inside the edge union of the independent random graphs $M'_{\ell'}$, which is in turn contained inside a single random graph $M$ with distribution $G(n, tn^{-1/8})$. Let $E$ be the event that $M$ has at least $\frac{1}{2} \binom{n}{2}$ edges. That is, $E$ is the event that the edge count of $M$, distributed like $\bin(\binom{n}{2}, tn^{-1/8})$, is at least $\frac{1}{2} \binom{n}{2}$. By the Chernoff bound,
$\Pr[E] \leq 2^{-\Omega(n^2)}$.

An edge $(v_i, v_j)$ is red in $\phi$ if it does not appear in $M$ and the edge $\{u,v_{i},v_{j}\}$ is red in $\chi'_\ell$. The latter occurs with probability $n^{-3/4}$, by property (1) of Lemma~\ref{lem:random-grid}. Thus,
\[
\Pr[\phi(v_{i},v_{j})\text{ red} | (v_i,v_j)\not \in E(M)]\ge n^{-3/4}.
\]
Given any particular choice of $M$, 
all such events are mutually independent, so the probability that $\phi$ is monochromatic blue
is at most
\[
\Pr[E] + \Pr[\phi \textnormal{ is monochromatic blue} | \overline{E}] \le 2^{-\Omega(n^2)} + \left(1-n^{-3/4}\right)^{\frac{1}{2}\binom{n}{2}}\le2^{-\Omega(n^{5/4})},
\]
which suffices to union bound over all $N^{n+1}$ choices of $u,v_{1},\ldots,v_{n}$,
as desired. This completes the proof.
\end{proof}

\section{The upper bound}

In this section, we first prove the upper bound in Theorem~\ref{thm:main-grid}, which states that 
\begin{equation}\label{eq:grid-upper}
\gr(\grid{2}{2}, K_n) \le 2^{c' n^{2/3}\log n}
\end{equation}
for some positive constant $c'$. 
As observed in the introduction, $r(\kn{4}{3},\lk{n}) \le 2\gr(\grid{2}{2}, K_n)$, so this immediately implies the upper bound in Theorem~\ref{thm:main} as well.

The main technical tool used is the following Ramsey-type result of Erd\H{o}s and Szemer\'edi.

\begin{lem}[Erd\H{o}s--Szemer\'edi~\cite{ErSz}]\label{lem:erdos-szemeredi}
There exists a positive constant $c_0$ such that if the edges of a complete graph $K_N$ are colored in $r$ colors, then there exists a clique of order $n = c_0 \frac{r}{\log r} \log N$ and a color $i$ that does not appear on any edge in that clique.
\end{lem}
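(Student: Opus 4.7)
The plan is to prove Lemma~\ref{lem:erdos-szemeredi} by an iterative greedy construction combined with an amortized analysis on the number of colors used. Set $n = c_0 \frac{r}{\log r} \log N$ and consider an arbitrary $r$-edge-coloring $\chi$ of $K_N$; the goal is to extract a clique of size $n$ that is missing at least one color.

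The basic iterative step is classical. Start with $S_0 = V(K_N)$ and the empty clique, and at step $k$ pick any $v_k \in S_{k-1}$; by pigeonhole among the $r$ colors, some color $i_k$ has color-neighborhood $|N_{i_k}(v_k) \cap S_{k-1}| \geq (|S_{k-1}|-1)/r$, where $N_i(v)$ denotes the color-$i$ neighborhood. Set $S_k = N_{i_k}(v_k) \cap S_{k-1}$ and add $v_k$ to the clique. Each iteration shrinks $|S_k|$ by a factor of at most $r$, so after $k = (\log N)/(\log r)$ steps one has a clique of this size whose edges use at most $k$ distinct colors; this is the classical multi-color Ramsey bound and falls short of the Erd\H{o}s--Szemer\'edi target by a factor of $r$.

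To close this factor-of-$r$ gap, one crucially exploits the relaxation in Lemma~\ref{lem:erdos-szemeredi}, which asks only that \emph{one} color be missing rather than the full monochromatic condition. I would refine the greedy step by \emph{reusing} previously chosen colors whenever possible: at each step, if some previously selected color $i_j$ ($j < k$) has $|N_{i_j}(v_k) \cap S_{k-1}|$ above a suitable smaller threshold (roughly $(|S_{k-1}|-1)/r'$ with $r' = r/\log r$), reuse it as $i_k$; otherwise, introduce a new color. A potential-function argument tracking the distribution of color densities in $S_{k-1}$ should show that across all $n = \Theta(r \log N / \log r)$ iterations at most $r - 1$ distinct colors ever get introduced, so the resulting clique is missing at least one color.

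The main obstacle is designing and analyzing the potential function. The $\log r$ factor in the denominator of $n$ arises from an entropy-like trade-off: reusing a color forces a smaller density threshold and hence a faster shrinkage of $|S_k|$ at reuse steps, but the potential should guarantee that reuse occurs frequently enough --- roughly a factor of $\log r$ more often than fresh-color introductions --- to yield the target clique size while keeping the distinct-color count below $r$. Making this argument fully rigorous, in particular verifying the precise shrinkage rate and the sharp additive $\log r$ in the denominator of the clique size, requires careful bookkeeping along the lines of the original argument of \cite{ErSz}; an alternative route would be a direct induction on $r$ in which one extracts a subgraph avoiding a ``sparse'' color and then applies the inductive hypothesis with $r-1$ colors, but this requires a substantially better independent-set extraction than Caro--Wei in the sparse color graph and so seems harder to push through at the stated parameters.
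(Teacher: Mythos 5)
This lemma is quoted verbatim from Erd\H{o}s and Szemer\'edi \cite{ErSz}; the paper gives no proof of it, so there is no internal argument to compare against. Judged on its own terms, your proposal has a genuine quantitative gap that no potential function can repair. In your scheme every greedy step replaces $S_{k-1}$ by a color neighborhood of size at least $(|S_{k-1}|-1)/r'$, so even the ``reuse'' steps shrink $S$ by a factor of roughly $r' = r/\log r$. Starting from $N$ vertices you therefore run out of room after about $\log N/\log(r/\log r)$ steps, however the colors are budgeted --- a factor of order $r$ short of the target $n = c_0 \frac{r}{\log r}\log N$. For a clique of that size to be built by nested shrinking inside $N$ vertices, the geometric-mean shrinkage per step can be at most $N^{1/n} = r^{\Theta(1/r)} = 1+\Theta\left(\frac{\log r}{r}\right)$, i.e., almost every step must retain a $1-O\left(\frac{\log r}{r}\right)$ fraction of the current set. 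A color neighborhood typically has size only about $|S|/r$, so no rule of the form ``pass into some color's neighborhood'' can achieve this. The step that can is passing into the \emph{complement} of one color's neighborhood, which typically has size $\left(1-\frac{1}{r}\right)|S|$.

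That observation is how the actual argument runs: take the color class $i$ with the fewest edges, a graph $H$ on $N$ vertices of density $\epsilon \le 1/r$, and prove the single-graph statement that such an $H$ contains a clique or an independent set of size $c\,\frac{\log N}{\epsilon\log(1/\epsilon)}$. An independent set of $H$ is a clique of $K_N$ omitting color $i$, and a clique of $H$ omits every other color, so either outcome gives the lemma. The $\log r$ in the denominator then arises not from a reuse-versus-fresh-color trade-off but from the dichotomy needed when induced subgraphs of $H$ encountered during the greedy descent become denser than $\epsilon$. Your closing remark correctly identifies that Caro--Wei is far too weak in the sparse color class, but the right conclusion is that one needs the Erd\H{o}s--Szemer\'edi density dichotomy there, not a multicolor bookkeeping scheme; as written, the central claim of your sketch (that at most $r-1$ colors are ever introduced over $n$ steps) is both unproved and attached to thresholds under which the construction terminates a factor of $r$ too early.
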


We are now ready to prove (\ref{eq:grid-upper}).

\begin{proof}[Proof of (\ref{eq:grid-upper})]
Let $c'=\max(2,1/c_0)$, where $c_0$ is the constant in Lemma \ref{lem:erdos-szemeredi}. Let $N= 2^{c' n^{2/3}\log n}$. We would like to show that in any $2$-edge-coloring of $\grid{N}{N}$, there is either a red rectangle or a blue $K_n$. Letting $r= n^{1/3}$, we will in fact prove the stronger statement that the same result holds for the rectangular grid $\grid{N}{M}$, where the height is chosen to be the Ramsey number $M = r(K_r, K_n) \le n^{r} \le N$. 

Fix a $2$-edge-coloring of $\grid{N}{M}$. Each column is a $2$-edge-colored $K_M$, so, by the definition of $M$, each column contains either a red $K_r$ or a blue $K_n$. In the latter case we are already done, so we may assume that every column of the grid contains a red $K_r$. Associate with each column $x$ the $y$-coordinates $\vec y(x) = (y_1,\ldots,  y_r)$ of the vertices of some red $K_r$ in that column. Since $y(x)$ can take at most $M^r$ possible values, there exist $N' = N/M^r$ columns with the same value $\vec y = \vec y(x)$. Since $M \le n^{r}$, we have $M^r \le n^{r^2} = 2^{n^{2/3} \log n}$. As $c' \geq 2$, we have $N' \ge \sqrt{N}$.

Restrict to the $N' \times r$ subgrid where the rows are the $r$ rows indexed by the coordinates of $\vec y$ and the columns are the $N'$ columns $x$ with $\vec y(x)=\vec y$. In this subgrid, every column is a monochromatic red $K_r$. If any pair of columns has at least two red edges between them, then we have a red rectangle and we are done. Thus, there is an edge-coloring 
$h:E(K_{N'}) \rightarrow [r]$ of the complete graph on $[N']$ such that the horizontal edge $(x,y)\sim (x',y)$ is blue whenever $y \ne h(x,x')$.

By Lemma~\ref{lem:erdos-szemeredi}, there is a clique of order
\[
c_0 \frac{r}{\log r} \log N' \ge c_0 \frac{3n^{1/3}}{\log n} \cdot \frac{1}{2} \log N \ge  n
\]
and a color $y$ such that $y  \ne h(x,x')$ for every pair of vertices $x,x'$ in the clique. If the vertices of the clique are $\{x_1,\ldots ,x_n\}$, then all edges between the vertices $\{(x_1,y),\ldots ,(x_n,y)\}$ in the original grid are blue, forming the desired blue $K_n$. This completes the proof.
\end{proof}

Next, we generalize this upper bound to arbitrary grids. Recall that $\grid{a}{b}$ is an $a\times b$ grid graph and $\gr(\grid{a}{b}, K_n)$ is the smallest $N$ such that in any $2$-edge-coloring of $\grid{N}{N}$ there is either a monochromatic red copy of $\grid{a}{b}$ or a monochromatic blue copy of $K_n$. We next prove the upper bound in Theorem \ref{thm:general-grid}, that, for all $a\ge b \ge 2$, there is a positive constant $c' = c'_a$ such that
\begin{equation} \label{eq:gridab-upper}
\gr(\grid{a}{b}, K_n) \le 2^{c' n^{1-(2^b-1)^{-1}} \log n}.
\end{equation}

The proof is essentially the result of iterating the argument for Theorem~\ref{thm:main-grid}. However, we will also require a generalization of Lemma~\ref{lem:erdos-szemeredi} from a recent paper of the authors~\cite{CoFoHeMuSuVe}. Define the {\it set-coloring Ramsey number} $R(n;r,s)$ 
to be the smallest positive integer $N$ such that if every edge of $K_N$ receives a set of $s$ colors from a palette of $r$ colors, then there must exist a copy of $K_n$ where a single common color appears on every edge.

\begin{lem}[{Corollary of \cite[Theorem 1.1]{CoFoHeMuSuVe}}]\label{lem:general-ES}
There is a constant $C_0$ such that the following holds. For any integers $n\ge 3$ and $r > s \ge r/2 \ge 1$,
\[
R(n;r,s) \le 2^{C_0n(r-s)^2 r^{-1} \log (r/(r-s))}.
\]
\end{lem}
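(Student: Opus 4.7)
My plan is to derive this lemma as a corollary of Theorem 1.1 of our companion paper \cite{CoFoHeMuSuVe}, which provides a general upper bound on the set-coloring Ramsey number $R(n;r,s)$. Once the general bound is available, deducing the present lemma reduces to substituting the hypothesis $r > s \ge r/2 \ge 1$ and simplifying the resulting expression.

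The first step is to recall \cite[Theorem 1.1]{CoFoHeMuSuVe} and its proof. That argument is a Ramsey-style iterative build-up. One maintains, at step $i$, a partial clique $v_1,\ldots,v_i$, a surviving vertex set $V_i \subseteq [N]$, and a candidate color set $C_i \subseteq [r]$, with the invariant that every edge from $v_j$ (for $j \le i$) to $u \in V_i$ has its $s$-subset containing $C_i$. At step $i+1$, one picks $v_{i+1} \in V_i$ arbitrarily and refines to $(V_{i+1}, C_{i+1})$ with $\emptyset \ne C_{i+1} \subseteq C_i$, combining two complementary ingredients: a pigeonhole that partitions $V_i$ according to the subset $\sigma(v_{i+1},u) \cap C_i$ as $u$ ranges over $V_i$, and an averaging argument showing that a typical color $c \in C_i$ lies in $\sigma(v_{i+1}, u)$ for at least a $\bigl(1 - (r-s)/|C_i|\bigr)$-fraction of $u \in V_i$. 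After $n$ iterations, $C_n \ne \emptyset$ supplies a common color for the monochromatic $K_n$ on $\{v_1, \ldots, v_n\}$.

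Writing $t = r-s$ and specializing to $t \le r/2$, one balances two types of iteration step. ``Cheap'' averaging steps reduce $|C_i|$ substantially (possibly down to a single color) while costing only a factor of roughly $1/(1 - t/|C_i|)$ on $|V_i|$; ``expensive'' pigeonhole steps reduce $|C_i|$ by $t$ while costing at most $\binom{|C_i|}{t} \le (er/t)^t$. Optimizing the schedule of cheap versus expensive steps so that $|C_i|$ descends from $r$ to $1$ over the course of $n$ steps, and using that $s \ge r/2$ to ensure the averaging contribution dominates throughout, yields a total log-loss of order $n \cdot (r-s)^2 r^{-1} \log(r/(r-s))$. Substituting into $\log N$ gives the stated bound after absorbing absolute constants into $C_0$.

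The main obstacle I anticipate is the algebraic verification of this optimization: the general bound of Theorem 1.1 may contain several competing exponent terms, and one must confirm that the $(r-s)^2/r$ contribution is indeed the optimum throughout the range $1 \le t \le r/2$ rather than merely an upper bound. This requires careful tracking of monotonicity and convexity of the per-step losses as $|C_i|$ varies, but it does not require new ideas beyond the setup in \cite[Theorem 1.1]{CoFoHeMuSuVe}.
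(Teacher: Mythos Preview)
The paper gives no proof of this lemma at all: it is stated purely as a black-box corollary of \cite[Theorem~1.1]{CoFoHeMuSuVe}, with no derivation, no substitution, and no simplification carried out in the text. So there is nothing to compare your argument against; the ``paper's own proof'' is simply the citation.

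Your proposal goes well beyond this by attempting to reconstruct the proof of \cite[Theorem~1.1]{CoFoHeMuSuVe} itself. That is not what the present paper does, and it is not what is being asked of this lemma here. If you want to match the paper, the correct move is a one-line appeal: quote the general upper bound from \cite[Theorem~1.1]{CoFoHeMuSuVe} and specialize to the range $r>s\ge r/2$.

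As for the sketch you give of the companion paper's argument: the high-level picture of an iterated vertex/colour refinement is in the right spirit for upper bounds of this type, but your description remains a plan rather than a proof. The dichotomy between ``cheap averaging steps'' and ``expensive pigeonhole steps'' and the claimed optimization yielding exactly the exponent $n(r-s)^2 r^{-1}\log(r/(r-s))$ are asserted, not verified; in particular you never specify the schedule of steps, nor do you check that the accumulated loss over $n$ rounds actually sums to the stated quantity. You also anticipate this gap yourself in your final paragraph. If you intend to include a self-contained proof rather than a citation, these details would need to be filled in; but for the purposes of this paper, the citation alone suffices.
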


Combining the above lemma with a supersaturation argument, we obtain the following result, needed for the iterative step in our proof of \eqref{eq:gridab-upper}. 

\begin{lem}\label{lem:general-grid-iteration} There is a constant $C$ such that the following holds. 
Suppose $r,a,n,N,N'$ are positive integers satisfying $r\ge 2a$, $n \geq (r/a)^2$ and $N\ge 2^{Cn a^2 r^{-1}\log (r/a)} N'$. If the vertical edges of $G: = \grid{N}{r}$ are colored red and the horizontal edges of $G$ are colored red or blue, then $G$ contains a blue $K_n$ or a copy of $\grid{N'}{a}$ where all vertices in some column are only incident to red edges. 
\end{lem}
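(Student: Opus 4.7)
The plan is to interpret the horizontal coloring of $G$ as a set-coloring of the complete graph $K_N$ on the columns and apply the set-coloring Ramsey bound (Lemma~\ref{lem:general-ES}). For each pair of columns $(x,x')$, let $B(x,x') \subseteq [r]$ denote the set of rows at which the horizontal edge between them is blue, and set $R(x,x') := [r] \setminus B(x,x')$.

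Assume no blue $K_n$ in $G$. The first step is to analyze the ``blue-heavy'' graph $H := \{(x, x') : |B(x, x')| \ge r - a\}$ on the vertex set $[N]$ of columns. Any $K_M$ inside $H$ would, upon restricting $B(x,x')$ to an $(r-a)$-subset on each edge, furnish a set-coloring of $K_M$ with set sizes $r - a \ge r/2$ (using $r \ge 2a$); by Lemma~\ref{lem:general-ES} this would then contain a $K_n$ with a common blue color, contradicting our no-blue-$K_n$ assumption. Hence $H$ is $K_M$-free for $M := R(n; r, r-a) \le 2^{C_0 n a^2 / r \log(r/a)}$, and Tur\'{a}n's theorem gives $|\bar H| \ge \binom{N}{2}/(M-1)$, where every $(x, x') \in \bar H$ satisfies $|R(x, x')| \ge a + 1$.

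Next, for each $Y \in \binom{[r]}{a}$ let $\bar H_Y := \{(x, x') \in \bar H : Y \subseteq R(x, x')\}$. Double counting gives
\[
\sum_{Y} |\bar H_Y| \;=\; \sum_{(x, x') \in \bar H} \binom{|R(x, x')|}{a} \;\ge\; (a+1)\,|\bar H|,
\]
so by pigeonhole some $Y^*$ satisfies $|\bar H_{Y^*}| \ge (a+1)\,|\bar H|/\binom{r}{a}$. A further averaging over vertices yields a column $x^*$ with $\bar H_{Y^*}$-degree at least $(a+1)(N-1)/((M-1)\binom{r}{a})$. Taking $x^*$ together with any $N' - 1$ of its $\bar H_{Y^*}$-neighbors gives $N'$ columns; restricting $G$ to these columns and to the $a$ rows of $Y^*$ exhibits an induced copy of $\grid{N'}{a}$ in which $x^*$ is the required red column, its horizontal edges being red by definition of $\bar H_{Y^*}$ and its vertical edges being red by the hypothesis on $G$.

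The main thing to verify is the quantitative estimate $(M-1)\binom{r}{a}/(a+1) \le 2^{C n a^2/r \log(r/a)}$, which is what guarantees that our lower bound on $N$ forces $x^*$ to have the needed $N' - 1$ neighbors. Combining $M \le 2^{C_0 n a^2/r \log(r/a)}$ with $\binom{r}{a} \le (er/a)^a = 2^{O(a \log(r/a))}$, and invoking the hypothesis $n \ge (r/a)^2$ to ensure that $n a^2/r \log(r/a)$ absorbs the $a \log(r/a)$ contribution from $\binom{r}{a}$, this reduces to a routine choice of the absolute constant $C$ in terms of $C_0$. The real content is thus the first step: the correct setup of the set-coloring so that Lemma~\ref{lem:general-ES} forces $H$ to be $K_M$-free, after which the rest is a sequence of standard averaging arguments.
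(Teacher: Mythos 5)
Your proposal is correct and follows essentially the same route as the paper: apply the set-coloring Ramsey bound (Lemma~\ref{lem:general-ES}) to the columns, then supersaturate and pigeonhole on a popular $a$-set of rows to extract a high-degree column. Your formulation via Tur\'an on the ``blue-heavy'' graph is just the complementary phrasing of the paper's argument, which shows that every $T$-subset of columns contains a pair spanning a red $\grid{2}{a}$ and then counts such pairs directly.
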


\begin{proof}
Suppose that $G$ contains no blue $K_n$.
Let $T = 2^{C_0 n a^2 r^{-1}\log (r/a)}$, so that, by Lemma~\ref{lem:general-ES}, $T \geq R(n; r, r-a)$. 
Let $C=\max\{1, 3C_0\}$. 
We claim that any $T \times r$ subgrid of $G$ contains a red $\grid{2}{a}$. Indeed, let $G'$ be a $T \times r$ subgrid of $G$ and define an edge-coloring $\chi$ on the complete graph $K_T$ whose vertices are the columns of $G'$, which colors each edge $(x,x')$ by the set of all $y$ for which the edge $(x,y)\sim (x',y)$ is blue. If there is no copy of a red $\grid{2}{a}$ in $G'$, the edge-coloring $\chi$ assigns at least $r-a$ colors to every edge, so, by the definition of $T$, we obtain a monochromatic $K_n$ in some color in $\chi$. This implies that we have a monochromatic blue $K_n$ in the original graph $G$, a contradiction.  

We now run the supersaturation argument. Define another auxiliary graph $H$ whose vertices are the columns of $G$ and edges are pairs of columns containing a red $\grid{2}{a}$. Color each edge $(x,x')$ of $H$ by a set of $a$ $y$-coordinates $y_1,\ldots, y_a$ such that $G[\{x,x'\}\times \{y_1,\ldots ,y_a\}]$ forms a monochromatic red $\grid{2}{a}$. 
We know that among every $T$ vertices of $H$, there is at least one edge. Hence, by supersaturation, there are at least 
\[\binom{N}{T}/\binom{N-2}{T-2} = \binom{N}{2}/\binom{T}{2} \geq N^2/T^2\]
edges in $H$, at least $N^2/T^2 \binom{r}{a}$ of which receive the same color. Hence, there must be a vertex with at least $N/T^2\binom{r}{a} \geq N/T^3 \geq N'$ neighbors in a single color. But this corresponds exactly to a copy of $\grid{N'}{a}$ with a column incident to only red edges, completing the proof.
\end{proof}

It remains to iterate the above lemma to obtain \eqref{eq:gridab-upper}.

\begin{proof}[Proof of (\ref{eq:gridab-upper})]
To begin, let $a = r_1 < r_2 <  \cdots < r_b$ and $1  = N_1 < N_2 < \cdots < N_b$ be positive integers satisfying $2\le r_{i+1}/r_i \leq \sqrt{n}$ and
\[
N_{i+1} = 2^{Cn r_i ^2 r_{i+1}^{-1} \log (r_{i+1} / r_i)} (N_i+1)
\]
for every $1\le i \le b-1$, where $C$ is the constant from Lemma~\ref{lem:general-grid-iteration}. Let $N \coloneqq n^{r_b^2} \cdot N_b$. We claim that $\gr(\grid{b}{a}, K_n) \le N$.

Indeed, suppose we are given a red-blue edge-coloring of $\grid{N}{N}$. We may further suppose that there is no blue $K_n$. Since $r(K_{r_b}, K_n) \le n^{r_b}$, in each column, there are at least

\[\binom{N}{n^{r_b}}/\binom{N-r_b}{n^{r_b} - r_b} = \binom{N}{r_b}/\binom{n^{r_b}}{r_b}\]

\noindent copies of $K_{r_b}$ that are monochromatic red.  Since

\[N/\binom{n^{r_b}}{r_b} \geq N/n^{r_b^2} = N_b,\]

\noindent by the pigeonhole principle, we obtain an induced subgrid of dimensions $N_b \times r_b$ with monochromatic red columns. Applying Lemma~\ref{lem:general-grid-iteration}, we find inside a subgrid of dimensions $(N_{b-1}+1) \times r_{b-1}$ where one column is complete in red to the others. Iterating this process $b-1$ times and setting the distinguished column aside at each step, we find a monochromatic $\grid{b}{a}$, as desired.

In order to obtain (\ref{eq:gridab-upper}), for $1\leq i \leq b$, we choose $r_i = a \cdot x^{2^{i-1}-1}$ with $x= n^{(2^b-1)^{-1}}$. One can check that these choices imply $2 \le r_{i+1}/r_i \le \sqrt{n}$ and $N_{i+1}=2^{O_a(nx^{-1}\log n)}N_i$ for all $1 \leq i \leq b-1$. Consequently, $N_b=2^{O_a(nx^{-1}\log n)}$ and, as $r_b^2=a^2n/x$, we also have $N=2^{O_a(nx^{-1}\log n)}$. Since, by symmetry, $\gr(\grid{a}{b}, K_n) = \gr(\grid{b}{a}, K_n)$, this yields the required bound.
\end{proof}

\section{Concluding Remarks}

The main problem left open by this paper is what the true bounds are for $r(\kn{4}{3},\lk{n})$ and the closely related function $\gr(\grid{2}{2}, K_n)$. In particular, we have the following question.

\begin{prob}
Does there exist $c > 0$ such that $\gr(\grid{2}{2}, K_n) \geq 2^{n^c}$?
\end{prob}

We will not hazard a guess on which direction the truth should lie, though it would be much more interesting were the answer to turn out negative.

Theorem~\ref{thm:general-grid} gives a sub-exponential bound for grid Ramsey numbers of the form 
\begin{equation}\label{eq:grid-upper-conc}
\gr(\grid{a}{b},  K_n) \le 2^{c' n^{1-(2^b-1)^{-1}} \log n}
\end{equation}
when $a\ge b \ge 2$. The dependence of the exponent of $n$ on $b$, which comes from iterating Lemma~\ref{lem:general-ES}, is inverse exponential. The authors suggested in \cite[Problem 6.1]{CoFoHeMuSuVe} that stronger bounds than Lemma~\ref{lem:general-ES} might be true for the set-coloring Ramsey number $R(n;r,s)$, especially in the regime $s \approx r - \sqrt{r}$ where we are applying it here. Such improved upper bounds on $R(n;r,s)$ would immediately improve the dependence on $b$ in (\ref{eq:grid-upper-conc}).

\vspace{3mm}
\noindent
{\bf Acknowledgements.} This research was initiated during a visit to the American Institute of Mathematics under their SQuaREs program. We would like to thank Noga Alon, Matija Buci\'c, Benjamin Gunby and Yuval Wigderson for helpful conversations.

\end{document}